\documentclass[CRMATH,Unicode,manuscript]{cedram}

\usepackage{mathtools}
\usepackage{bm}
\usepackage{bbm}
\usepackage{pifont} 
\usepackage{mathrsfs} 

\newcommand{\nc}{\newcommand}

\nc{\symbvec}[1]{\boldsymbol{\bm{#1}}} 

\nc{\bfx}{\mathbf{x}} 
\nc{\bfy}{\mathbf{y}} 
\nc{\bfz}{\mathbf{z}} 
\nc{\bfu}{\mathbf{u}} 
\nc{\bfv}{\mathbf{v}} 
\nc{\bfw}{\mathbf{w}} 
\nc{\bft}{\mathbf{t}} 
\nc{\bfb}{\mathbf{b}} 
\nc{\bfn}{\mathbf{n}} 
\nc{\bff}{\mathbf{f}} 
\nc{\bfq}{\mathbf{q}} %
\nc{\bfd}{\mathbf{d}} 
\nc{\bfe}{\mathbf{e}} 

\nc{\bfA}{\mathbf{A}} 
\nc{\bfR}{\mathbf{R}} 
\nc{\bfD}{\mathbf{D}} 
\nc{\bfI}{\mathbf{I}} 
\nc{\bfM}{\mathbf{M}} 
\nc{\bfK}{\mathbf{K}} 
\nc{\bfV}{\mathbf{V}} 
\nc{\bfW}{\mathbf{W}} 
\nc{\bfC}{\mathbf{C}} 
\nc{\bfP}{\mathbf{P}} 
\nc{\bfZ}{\mathbf{Z}} 
\nc{\bfF}{\mathbf{F}} 
\nc{\bfQ}{\mathbf{Q}} %
\nc{\bfU}{\mathbf{U}} %
\nc{\bfB}{\mathbf{B}} %
\nc{\bfH}{\mathbf{H}} %
\nc{\bfE}{\mathbf{E}} %
\nc{\bfS}{\mathbf{S}} %
\nc{\bfX}{\mathbf{X}} %

\nc{\bfId}{\mathbf{I}_{\mathrm{d}}} 

\nc{\hatu}{\hat{u}} 
\nc{\hatv}{\hat{v}} 

\nc{\bbN}{\mathbb{N}} 
\nc{\bbR}{\mathbb{R}} 
\nc{\bbC}{\mathbb{C}} 
\nc{\bbP}{\mathbb{P}} 
\nc{\bbH}{\mathbb{H}} 
\nc{\bbA}{\mathbb{A}} %
\nc{\bbT}{\mathbb{T}} %
\nc{\bbD}{\mathbb{D}} %
\nc{\bbJ}{\mathbb{J}} %
\nc{\bbF}{\mathbb{F}} %
\nc{\bbM}{\mathbb{M}} %
\nc{\bbQ}{\mathbb{Q}} %
\nc{\bbK}{\mathbb{K}} %

\nc{\wH}{\widetilde{H}}
\nc{\wA}{\widetilde{A}}

\nc{\calS}{\mathcal{S}} 
\nc{\calD}{\mathcal{D}} 
\nc{\calT}{\mathcal{T}} 
\nc{\calR}{\mathcal{R}} 
\nc{\calP}{\mathcal{P}} 
\nc{\calV}{\mathcal{V}} 
\nc{\calW}{\mathcal{W}} 
\nc{\calJ}{\mathcal{J}} 
\nc{\calL}{\mathcal{L}} 
\nc{\calH}{\mathcal{H}} 
\nc{\calO}{\mathcal{O}} 
\nc{\calE}{\mathcal{E}} 
\nc{\calI}{\mathcal{I}} 

\nc{\rmP}{\mathrm{P}}
\nc{\rmI}{\mathrm{I}}
\nc{\rmG}{\mathrm{G}}
\nc{\rmA}{\mathrm{A}}
\nc{\rmR}{\mathrm{R}}
\nc{\rmT}{\mathrm{T}}
\nc{\rmV}{\mathrm{V}}
\nc{\rmW}{\mathrm{W}}
\nc{\rmK}{\mathrm{K}}
\nc{\rmL}{\mathrm{L}}
\nc{\rmH}{\mathrm{H}}
\nc{\rmX}{\mathrm{X}}
\nc{\rmY}{\mathrm{Y}}
\nc{\rmS}{\mathrm{S}}
\nc{\rmD}{\mathrm{D}}

\nc{\fraku}{\mathfrak{u}}
\nc{\frakv}{\mathfrak{v}}
\nc{\frake}{\mathfrak{e}}

\nc\diff{\mathop{}\!\mathrm{d}}   
\DeclareMathOperator{\supp}{supp}   
\DeclareMathOperator{\diag}{diag} 

\DeclareMathOperator{\Id}{Id} 

\nc{\bfPi}{\bm{\Pi}} 
\nc{\bfSigma}{\bm{\Sigma}} %
\DeclareMathOperator{\Div}{div}

\DeclareMathOperator{\GL}{G} 
\DeclareMathOperator{\V}{V} 
\DeclareMathOperator{\K}{K} 
\DeclareMathOperator{\Ktilde}{\tilde{\K}} 
\DeclareMathOperator{\W}{W} 
\DeclareMathOperator{\CP}{P} 
\DeclareMathOperator{\A}{A} %
\DeclareMathOperator{\B}{B} %
\nc{\SumFourier}{\sum_{m \in \mathbb{Z}}} 

\newcommand{\traceNeu}[1][]{\gamma_{\mathrm{N}}^{#1}}
\newcommand{\traceDir}[1][]{\gamma_{\mathrm{D}}^{#1}}
\newcommand{\traceDiv}[1][]{\gamma_{\Div}^{#1}}
\newcommand{\traceNeuC}[1][]{\gamma_{\mathrm{N},c}^{#1}}
\newcommand{\traceDirC}[1][]{\gamma_{\mathrm{D},c}^{#1}}

\newcommand{\BoldtraceNeu}[1][]{\symbvec{\gamma}_{\mathrm{N}}^{#1}}
\newcommand{\BoldtraceDir}[1][]{\symbvec{\gamma}_{\mathrm{D}}^{#1}}
\newcommand{\BoldtraceDiv}[1][]{\symbvec{\gamma}_{\Div}^{#1}}
\newcommand{\BoldtraceNeuC}[1][]{\symbvec{\gamma}_{\mathrm{N},c}^{#1}}
\newcommand{\BoldtraceDirC}[1][]{\symbvec{\gamma}_{\mathrm{D},c}^{#1}}
\newcommand{\BoldtraceDivC}[1][]{\symbvec{\gamma}_{\Div,c}^{#1}}

\nc{\AnsCombined}{Ansatz-Combined}
\nc{\AnsDL}{Ansatz-DL}
\nc{\AnsSL}{Ansatz-SL}
\nc{\DirCombined}{Direct-Combined}
\nc{\DirFK}{Direct-FK}
\nc{\DirSK}{Direct-SK}
\nc{\DirTraceDir}{Direct-TrDir}
\nc{\DirTraceNeu}{Direct-TrNeu}

\nc{\mH}{\mathrm{H}}
\nc{\mX}{\mathrm{X}}
\nc{\setMI}{\mathcal{S}}
\nc{\bx}{\symbvec{x}} 
\nc{\by}{\symbvec{y}} 

\makeatletter
\DeclareFontFamily{OMX}{MnSymbolE}{}
\DeclareSymbolFont{MnLargeSymbols}{OMX}{MnSymbolE}{m}{n}
\SetSymbolFont{MnLargeSymbols}{bold}{OMX}{MnSymbolE}{b}{n}
\DeclareFontShape{OMX}{MnSymbolE}{m}{n}{
    <-6>  MnSymbolE5
   <6-7>  MnSymbolE6
   <7-8>  MnSymbolE7
   <8-9>  MnSymbolE8
   <9-10> MnSymbolE9
  <10-12> MnSymbolE10
  <12->   MnSymbolE12
}{}
\DeclareFontShape{OMX}{MnSymbolE}{b}{n}{
    <-6>  MnSymbolE-Bold5
   <6-7>  MnSymbolE-Bold6
   <7-8>  MnSymbolE-Bold7
   <8-9>  MnSymbolE-Bold8
   <9-10> MnSymbolE-Bold9
  <10-12> MnSymbolE-Bold10
  <12->   MnSymbolE-Bold12
}{}

\let\llangle\@undefined
\let\rrangle\@undefined
\DeclareMathDelimiter{\llangle}{\mathopen}%
                     {MnLargeSymbols}{'164}{MnLargeSymbols}{'164}
\DeclareMathDelimiter{\rrangle}{\mathclose}%
                     {MnLargeSymbols}{'171}{MnLargeSymbols}{'171}
\makeatother
\usepackage[capitalise,noabbrev]{cleveref}

\title{Multi-domain FEM-BEM coupling with several impenetrable obstacles}

\alttitle{Couplage FEM-BEM multi-domaine avec plusieurs obstacles impénétrables}  

\author{\firstname{Antonin} \lastname{Boisneault}\CDRorcid{0000-0001-7986-9048}}
\address{POEMS, CNRS, Inria, ENSTA, Institut Polytechnique de Paris, 91120 Palaiseau, France.}
\addressSameAs{2}{Inria, Unité de Mathématiques Appliquées, ENSTA, Institut Polytechnique de Paris, 91120 Palaiseau, France.}
\email[A. Boisneault]{antonin.boisneault@inria.fr}

\author{\firstname{Marcella} \lastname{Bonazzoli}\CDRorcid{0000-0002-0284-5643}}
\address{Inria, Unité de Mathématiques Appliquées, ENSTA, Institut Polytechnique de Paris, 91120 Palaiseau, France.}
\email[M. Bonazzoli]{marcella.bonazzoli@inria.fr}

\author{\firstname{Xavier} \lastname{Claeys}\CDRorcid{0000-0003-0826-6244}}
\addressSameAs{1}{POEMS, CNRS, Inria, ENSTA, Institut Polytechnique de Paris, 91120 Palaiseau, France.}
\email[X. Claeys]{xavier.claeys@ensta.fr}

\author{\firstname{Pierre} \lastname{Marchand}\CDRorcid{0000-0002-2522-6837}}
\addressSameAs{1}{POEMS, CNRS, Inria, ENSTA, Institut Polytechnique de Paris, 91120 Palaiseau, France.}
\email[P. Marchand]{pierre.marchand@inria.fr}

\subjclass{65N38, 65N55, 35J05}

\keywords{FEM-BEM coupling, domain decomposition, cross-points, Helmholtz equation}

\altkeywords{couplage FEM-BEM, décomposition de domaine, points de croisement, équation de Helmholtz}

\begin{abstract} 
In~\cite{BoisneaultBonazzoliEtAl2026DFB}, we have analyzed a new formulation of the coupling of finite and boundary element methods (FEM-BEM) for Helmholtz problems, involving several heterogeneous bounded subdomains and one homogeneous unbounded subdomain. This formulation, called Generalized Optimized Schwarz Method (GOSM), is substructured, that is, its unknowns are associated with the subdomains interfaces. 
To derive the GOSM, the first step was to prove that a solution to the Helmholtz problem satisfies a specific multi-domain variational formulation, which involves one operator for each subdomain, each operator being independent of the others. 
In the present contribution, we design a variational formulation of that type for a more general geometrical and material configuration: several heterogeneous bounded subdomains, impenetrable obstacles and homogeneous subdomains are allowed. Note that, like in~\cite{BoisneaultBonazzoliEtAl2026DFB}, we assume that only one subdomain is unbounded, and that its boundary is bounded. The domain partition can have cross-points, that is, points where at least three subdomains are adjacent. We also prove that a solution to the initial Helmholtz problem can be recovered from a solution to the multi-domain variational formulation. 
This shows that the GOSM is a general and flexible framework to model acoustic wave propagation, as it can handle both multi-domain FEM-BEM coupling and (weakly imposed) boundary conditions on several obstacles.
\end{abstract}

\begin{altabstract} 
  Dans~\cite{BoisneaultBonazzoliEtAl2026DFB}, nous avons étudié une nouvelle formulation de type ``méthode des éléments finis - méthode des éléments finis de frontière'' (abrégé FEM-BEM de l'anglais Finite Element Method - Boundary Element Method) pour les problèmes de Helmholtz, la géométrie de ces derniers pouvant être constituée de plusieurs sous-domaines hétérogènes bornés ainsi que d'un sous-domaine homogène non borné. Cette formulation, nommée ``méthode de Schwarz optimisée généralisée'' (abrégé GOSM de l'anglais Generalized Optimized Schwarz Method), est sous-structurée, c'est-à-dire que ses inconnues sont associées aux interfaces entre les sous-domaines. 
  Afin d'établir la GOSM, la première étape consiste à prouver qu'une solution du problème de Helmholtz satisfait une formulation variationnelle multi-domaine spécifique, celle-ci impliquant un opérateur pour chaque sous-domaine, chaque opérateur étant indépendant des autres.
  Dans cette contribution, nous concevons une telle formulation variationnelle pour des géométries et matériaux plus généraux : plusieurs sous-domaines hétérogènes bornés, obstacles impénétrables et sous-domaines homogènes non bornés sont autorisés. Toutefois, tout comme dans~\cite{BoisneaultBonazzoliEtAl2026DFB}, nous supposons qu'un seul sous-domaine est non borné, et que son bord est borné. La partition du domaine peut avoir des points de croisement (aussi appelés jonctions), ces points où au moins trois sous-domaines sont adjacents. Nous prouvons également qu'une solution du problème de Helmholtz initial peut être reconstruite à partir d'une solution de la formulation variationnelle multi-domaine.
  Tout cela montre que la GOSM est un cadre général et flexible pour modéliser la propagation d'ondes acoustiques, puisqu'elle permet de gérer simultanément des couplages FEM-BEM multi-domaines et des conditions aux limites (faiblement imposées) sur plusieurs obstacles.
\end{altabstract}

\begin{document}

\maketitle

\section{Introduction}

To simulate time-harmonic acoustic wave propagation in complex media, modeled by the Helmholtz equation \(\Delta u + \kappa^2 u = 0\), it can be advantageous to use the knowledge of how the wavenumber \(\kappa\) varies in different regions. 
By reformulating local problems in homogeneous subdomains as equations set on their boundary, the problem can be solved by coupling \emph{finite element and boundary element methods} (see e.g.~\cite{JohnsonNedelec1980CBI,BielakMacCamy1983EIP,Costabel1987SMC,HeKaitai1990CMF}). This consists in dealing with heterogeneous regions using classical variational formulations obtained by integration by parts, and with homogeneous ones using \emph{Boundary Integral Equations} (BIEs). 
However, once discretized, such linear systems lead to sums of sparse matrices and densely populated matrices, respectively arising from heterogeneous and homogeneous sub-problems, discretized with the FEM and the BEM\@. This rules out classical preconditioners.

Alternatively, Domain Decomposition (DD) techniques, such as \emph{Optimized Schwarz Methods} (OSMs), are also well-suited to deal with Helmholtz problems with varying wavenumber.
Apart from~\cite{BendaliBoubendirEtAl2007FDD,bonnet:hal-05249078,CaudronAntoineEtAl2020OWC,LangerSteinbach2005CBF,LangerPechstein2007CFS,LangerSteinbach2007CFB}, few studies have investigated DD in the context of FEM-BEM coupling.
Additionally, these studies deal with a configuration with only two subdomains. 
When it comes to multi-domain configurations involving \emph{cross-points} (the points where at least three subdomains are adjacent), even fewer studies exist. For instance, at the discrete level, in several studies~\cite{FarhatLesoinneEtAl2001FDU,BendaliBoubendir2006NOD,BoubendirBendaliEtAl2008CNO} one additional unknown per cross-points is considered. 

We emphasize that, already at the continuous level, the presence of cross-points raises functional analysis issues. 
Indeed, for a given subdomain \(\omega\), the restriction of its boundary to its common part with a neighboring subdomain \(\tilde{\omega}\) is not continuous in usual Sobolev spaces used for Dirichlet or Neumann traces, see e.g.~\cite[§6.2]{MR3202533} for more details.
Hence, the most natural multi-domain formulations that would use such restriction operators can not be written in a proper function space framework. Moreover, cross-points may deteriorate the convergence once the problem is discretized (see e.g.~\cite[§4]{MR4106670}). Therefore, our aim is to design an optimized Schwarz method (OSM) robust to cross-points, with a sound functional analysis framework and convergence theory, for general propagation media.

In~\cite{BonazzoliClaeys2024MFB}, the authors have designed new multi-domain FEM-BEM formulations allowing for a clean treatment of cross-points, but their discretization and practical implementation are not straightforward. 
A substructured formulation set on the skeleton of the domain partition has been derived in~\cite{FlorianHiptmairEtAl2023SIE,GraessleHiptmairEtAl2025SSI}, which only relies on BIEs, even for heterogeneous subdomains. A discrete study is performed in~\cite{GraessleSauter2026BVC}, but the practical implementation is not straightforward.
By adopting instead a substructuring DD approach, an alternative formulation, called Generalized OSM (GOSM), has been introduced in~\cite{MR4433119, Claeys2023NOS} for \emph{bounded} geometries. A rigorous theory guarantees geometric convergence of the GOSM regardless of the presence of cross-points. Recently, in~\cite{BoisneaultBonazzoliEtAl2026DFB} we have extended the GOSM to FEM-BEM coupling to treat also an unbounded subdomain (with bounded boundary).

The GOSM relies on a few assumptions, one of them being that a solution of the initial Helmholtz problem must satisfy a specific multi-domain variational formulation, which involves one operator for each subdomain, each operator being independent of the others. In the present contribution, we establish such a formulation for general propagation media, that is, any number of heterogeneous bounded subdomains, impenetrable obstacles and homogeneous subdomains are considered. In particular, this formulation involves the Costabel FEM-BEM coupling~\cite{Costabel1987SMC} for the heterogeneous and homogeneous subdomains, and weakly imposed boundary conditions on each impenetrable obstacle. 
Given this new formulation, the whole theory for the GOSM in~\cite{BoisneaultBonazzoliEtAl2026DFB} can be seamlessly extended to such general configurations. 

The paper is organized as follows. In Section~\ref{sec:tp} we define the Helmholtz problem we aim to solve, as well as the domain partition. After introducing several notations, we rewrite equivalently the initial problem as a transmission problem.
We recall classical results about boundary integral operators in Section~\ref{sec:bios}. 
Suitable multi-domain function spaces are defined in Section~\ref{sec:multitrace_spaces}. In particular, the so-called single-trace spaces allow us expressing transmission conditions on the subdomains interfaces. 
Finally, in Section~\ref{sec:varf}, we state the sought variational formulation, and establish its equivalence with the initial transmission problem. We also highlight that the equivalence relies on the use of the Costabel FEM-BEM coupling.

\section{Definition of the transmission problem}\label{sec:tp}

We consider a domain \(\Omega_{\mathrm{O}} \subset \mathbb{R}^d\) (\(d=2,3\)), which represents impenetrable obstacles, not necessarily connected nor bounded but with bounded Lipschitz boundary, such that \(\Omega_{\mathrm{O}}^c \coloneqq \mathbb{R}^d \setminus \overline{\Omega_{\mathrm{O}}}\) is connected. We are interested in solving the Helmholtz problem: find \(u\) in \(H^1_{\mathrm{loc}}(\Delta, \overline{\Omega_{\mathrm{O}}^c})\) such that
\begin{equation}\label{pbm:initialPb}
  \begin{cases}
      - \Delta u - \kappa(x)^2 u = f \text{ in } \Omega_{\mathrm{O}}^c,\\
        +\;\text{Boundary condition on }\Gamma_{\mathrm{O}} \coloneqq \partial\Omega_{\mathrm{O}}, \\
        +\;\text{Sommerfeld's radiation condition if } \Omega_{\mathrm{O}} \text{ bounded,}
      \end{cases}
\end{equation}
where \(\kappa:\Omega_{\mathrm{O}}^c \to \mathbb{R}^{+*}\) is the wavenumber, the source term \(f \in L^2(\Omega_{\mathrm{O}}^c)\) has bounded support, and boundary conditions can be of Dirichlet (sound-soft), Neumann (sound-hard), Robin, or even mixed type. We refer e.g.~to~\cite[Def.~2.6.1]{SauterSchwab2011BEM} for the definition of \(H^1_{\mathrm{loc}}(\Delta, \overline{\Omega_{\mathrm{O}}^c})\). We aim at writing equivalently Problem~\eqref{pbm:initialPb} as a transmission problem, by distinguishing homogeneous and heterogeneous subdomains, and introduce several notations for this purpose.

\paragraph{Domain partition} 
We assume that \(\Omega_{\mathrm{O}}\) admits \(N_{\mathrm{O}} \in \mathbb{N}\) non-overlapping connected subdomains \(\Omega_{\mathrm{O},j}\) (\(j=1,\dots, N_{\mathrm{O}}\)) satisfying \(\partial\Omega_{\mathrm{O},j} \subset \Gamma_{\mathrm{O}}\). 
This can also be written \( \overline{\Omega_{\mathrm{O}}} =  \bigcup_{j=1}^{N_{\mathrm{O}}} \overline{\Omega_{\mathrm{O},j}}\) and \(\Gamma_{\mathrm{O}} = \bigcup_{j=1}^{N_{\mathrm{O}}} \partial\Omega_{\mathrm{O},j}\). 
Because \(\Omega_{\mathrm{O}}\) is a Lipschitz domain, the condition \(\partial\Omega_{\mathrm{O},j} \subset \Gamma_{\mathrm{O}}\) implies that the intersection of two distinct \(\overline{\Omega_{\mathrm{O},j}}\) is empty. 
Note that the boundary of any \(\Omega_{\mathrm{O},j}\) is bounded. 

We also assume that \(\Omega_{\mathrm{O}}^c\) is partitioned into non-overlapping connected subdomains with bounded boundary that match the variation of \(\kappa\): 
\[
\overline{\Omega_{\mathrm{O}}^c} = \left(\bigcup_{k=1}^{N_{\mathrm{B}}} \overline{\Omega_{\mathrm{B},k}}\right) \cup \left(\bigcup_{l=1}^{N_{\mathrm{F}}} \overline{\Omega_{\mathrm{F},l}}\right),
\]
with \(\kappa(x) = \kappa_k>0\) in \(N_{\mathrm{B}} \in \mathbb{N}\) homogeneous subdomains \(\Omega_{\mathrm{B},k}\) (\(k=1,\dots, N_{\mathrm{B}}\)) (where BEM can be applied), and \(\kappa\) non-constant in \(N_{\mathrm{F}} \in \mathbb{N}\) heterogeneous bounded subdomains \(\Omega_{\mathrm{F},l}\) (\(l=1,\dots, N_{\mathrm{F}}\)) (where FEM is used instead). One of the \(\Omega_{\mathrm{B},k}\) (say \(\Omega_{\mathrm{B},1}\)) is unbounded if \(\Omega_{\mathrm{O}}\) is bounded (see Figure~\ref{fig:example_geometries}, left). Note that the opposite case, where one of the obstacles \(\Omega_{\mathrm{O},j}\) (say \(\Omega_{\mathrm{O},1}\)) is unbounded, corresponds to acoustic wave propagation in a bounded geometry, that is, Problem~\eqref{pbm:initialPb} with \(\Omega_{\mathrm{O}}^c\) bounded (see Figure~\ref{fig:example_geometries}, right).  
Moreover, \(N_{\mathrm{B}}+N_{\mathrm{F}}\geq 1\) and \(N_{\mathrm{O}}+N_{\mathrm{B}}+N_{\mathrm{F}} \geq 2\). All the subdomains are Lipschitz. For any subdomain \(\omega\) we define the outgoing normal \(\symbvec{n}_{\omega}\) on its boundary \(\Gamma_{\omega} \coloneqq \partial \omega\).

\begin{figure}[!h]
	\centering
	\includegraphics{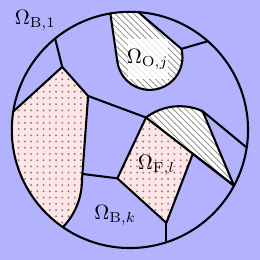}
	\hspace{1.5cm}
	\includegraphics{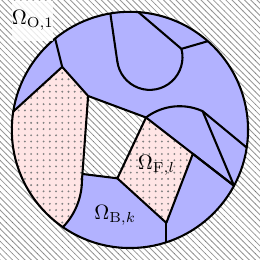}
	\caption[Example of geometries]{
	Examples of an unbounded geometry (left) and a bounded geometry (right). Homogeneous subdomains have a plain violet background, the heterogeneous subdomains have a dotted rose background and the impenetrable subdomains are hatched. 
	Subdomains shape is arbitrary, they are only assumed to be at least Lipschitz.}\label{fig:example_geometries}
\end{figure}

\paragraph{Notation conventions}
  Many indexes will be used throughout this paper, so we emphasize the following conventions that we apply to make the notation more compact.
  \begin{enumerate}
    \item A small (resp.~capital) bold character represents a vector (resp.~a block diagonal operator), and its components are bold if they are themselves vectors.
    \item The indexes \(j,k,l\) are always associated with the mathematical objects indexed respectively by \(\mathrm{O}, \mathrm{B}, \mathrm{F}\). When the same index is involved twice in a mathematical expression, the range to which the index belongs is omitted. 
    For instance, the domain decomposition of \(\Omega_{\mathrm{O}}^c\) can now be written \(\overline{\Omega_{\mathrm{O}}^c} = \left(\bigcup_{k} \overline{\Omega_{\mathrm{B},k}}\right) \cup \left(\bigcup_{l} \overline{\Omega_{\mathrm{F},l}}\right)\).
    In the same way, a vector \(\symbvec{p} \coloneqq \left(p_{1}, \dots, p_{N_{\mathrm{F}}}\right)\) is shortened to \( \left(p_{l}\right)_l \). We emphasize that \(0 \in \mathbb{N}\). Thus, if \(N_{\mathrm{F}} = 0\), \(\Omega_{\mathrm{O}}^c\) is only composed of \(\left(\Omega_{\mathrm{B},k}\right)_k\), and the vector \(\symbvec{p}\) is empty.
    \item When nothing is specified, a bullet point \(\bullet\) as an index always refers to one of the three letters \(\mathrm{O},\mathrm{B},\mathrm{F}\). Additionally, a bold character indexed by a \(\bullet\) is a vector whose size is a multiple of \(N_{\bullet}\) elements that should be clear from the context. For instance, \(\symbvec{p}_{\mathrm{F}} = \left(p_{\mathrm{F},l}\right)_l\) has \(N_{\mathrm{F}}\) components, while \(\symbvec{v}_{\mathrm{F}} = \left(\psi_{\mathrm{F},l}, q_{\mathrm{F},l}\right)_l\) has \(2 N_{\mathrm{F}}\) components.
    \item A Greek letter as an index always refers to a multi-index \((\bullet, m)\), where \(m\) is the index associated with \(\bullet\). By defining the multi-indexes sets \(\setMI_{\bullet} \coloneqq \{\left(\bullet,m\right)\}_{m}\), the domain decomposition of \(\Omega_{\mathrm{O}}^c\) can now be written \(\overline{\Omega_{\mathrm{O}}^c} = \bigcup_{\alpha \in \setMI_{\mathrm{B}}\cup \setMI_{\mathrm{F}}} \overline{\Omega_{\alpha}}\). Additionally, for a vector \(\symbvec{p}_{\bullet} \coloneqq \left(p_{\alpha}\right)_{\alpha \in \setMI_{\bullet}}\), we shorten its expression to \(\symbvec{p}_{\bullet} = \left(p_{\alpha}\right)_{\setMI_{\bullet}}\).
  \end{enumerate}


\paragraph{Traces operators}
For a given domain \(\omega \subset \mathbb{R}^d\), the \emph{interior Dirichlet-Neumann trace operator} on \(\Gamma_{\omega}\), \(\gamma^{\omega} \colon H^1_{\mathrm{loc}}(\Delta, \overline{\omega}) \to H^{{1}/{2}}(\Gamma_{\omega})\times H^{-{1}/{2}}(\Gamma_{\omega})\), is defined as the unique bounded linear operator satisfying \(\gamma^{\omega}(\varphi) \coloneqq \bigl(\traceDir[\omega](\varphi), \traceNeu[\omega](\varphi)\bigr) \coloneqq \bigl(\varphi|_{\Gamma_{\omega}}, \symbvec{n}_{\omega} \cdot \nabla \varphi|_{\Gamma_{\omega}}\bigr)\), for all \(\varphi\in \mathscr{C}^{\infty}(\overline{\omega}) \coloneq\{\varphi\vert_{\omega},\;\varphi\in\mathscr{C}^{\infty}(\mathbb{R}^d)\}\). 
The \emph{exterior (or complementary) Dirichlet-Neumann trace operator} \(\gamma_c^{\omega}\) is similarly defined as the unique bounded linear operator satisfying \(\gamma_c^{\omega}(\varphi) \coloneqq \bigl(\traceDirC[\omega](\varphi), \traceNeuC[\omega](\varphi)\bigr) \coloneqq \bigl(\varphi|_{\Gamma_{\omega}}, \symbvec{n}_{\omega} \cdot \nabla \varphi|_{\Gamma_{\omega}}\bigr)\) for all \(\varphi\in \mathscr{C}^{\infty}(\mathbb{R}^d\setminus\omega)\).
Define also the \emph{interior normal trace map} \(\gamma_{\Div}^{\omega} \colon H_{\mathrm{loc}}(\Div, \overline{\omega}) \to H^{-{1}/{2}}(\Gamma_{\omega})\) as the unique bounded linear operator satisfying \(\gamma_{\Div}^{\omega}(\varphi) \coloneqq \varphi |_{\Gamma_{\omega}} \cdot \symbvec{n}_{\omega}\) for all \(\varphi\in \mathscr{C}^{\infty}(\mathbb{R}^d\setminus\omega)^d\). The \emph{exterior (or complementary) normal trace map} \(\gamma_{\Div,c}^{\omega} \) is similarly defined, taking the exterior trace instead.
For ease of reading, for any domain \(\Omega_{\alpha}\) its boundary \(\Gamma_{\Omega_{\alpha}}\) is denoted \(\Gamma_\alpha\), and the operators \(\traceDir[\Omega_{\alpha}]\), \(\traceNeu[\Omega_{\alpha}]\) and \(\traceDiv[\Omega_{\alpha}]\) are denoted \(\traceDir[\alpha]\), \(\traceNeu[\alpha]\) and \(\traceDiv[\alpha]\). The same conventions apply for the exterior traces.

Let \(\Omega \subset \mathbb{R}^d\) be another domain such that \(\omega \subset \Omega\). For \(u \in H^1_{\mathrm{loc}}(\overline{\Omega})\), the Dirichlet trace of \(u\) on \(\Gamma_{\omega}\) is written indifferently \(\traceDir[\omega](u |_{\omega})\) or \(\traceDir[\omega](u)\).
The vector of Dirichlet traces of any function \(u \in H^1_{\mathrm{loc}}(\overline{\Omega_{\bullet}})\) is denoted \( \symbvec{\traceDir[\bullet]}(u) \coloneqq \bigl( \traceDir[\alpha](u) \bigr)_{\setMI_{\bullet}}\).
Additionally, the Dirichlet trace of a vector \(\symbvec{v}_{\bullet}\) in \(\mathbb{H}^1(\Omega_{\mathrm{\bullet}}) \coloneqq \Pi_{\alpha \in \setMI_{\bullet}} H^1_{\mathrm{loc}}(\overline{\Omega_{\alpha}})\), is defined componentwise by 
\( \BoldtraceDir[\bullet](\symbvec{v}_{\bullet}) \coloneqq \bigl( \traceDir[\alpha](v_{\alpha}) \bigr)_{\setMI_{\bullet}}\).
Similar definitions hold for the Neumann trace, the Dirichlet-Neumann trace pair, and the exterior traces.


\paragraph{Equivalent transmission problem}
Problem~\eqref{pbm:initialPb} can be now rewritten taking into account the domain decomposition: 
find \(u\) in \(H^1_{\mathrm{loc}}(\Delta, \overline{\Omega_{\mathrm{O}}^c})\) such that
\begin{equation}\label{pbm:multi_domain_config}
  \begin{cases}
    \begin{aligned}
      - \Delta u|_{\Omega_{\alpha}} - \kappa|_{\Omega_{\alpha}}^2 u|_{\Omega_{\alpha}} &= f|_{\Omega_{\alpha}} & &\text{ in } \Omega_{\alpha}, & \alpha \in \setMI_{\mathrm{B}} \cup \setMI_{\mathrm{F}},& \\[0.1cm]
      \traceDir[\alpha](u) - \traceDir[\beta](u) &= 0 & &\text{ on } \Gamma_{\alpha} \cap \Gamma_{\beta}, & \alpha,\beta \in \setMI_{\mathrm{B}} \cup \setMI_{\mathrm{F}},& \;\; \alpha \neq \beta \\
      \traceNeu[\alpha](u) + \traceNeu[\beta](u)  &= 0 & &\text{ on } \Gamma_{\alpha} \cap \Gamma_{\beta}, & \alpha,\beta \in \setMI_{\mathrm{B}} \cup \setMI_{\mathrm{F}},& \;\; \alpha \neq \beta \\[0.1cm]
      +\;\text{Boundary condi}&\text{tions} & &\text{ on }\Gamma_{\alpha} \cap \Gamma_{\delta}, & \alpha \in \setMI_{\mathrm{B}} \cup \setMI_{\mathrm{F}},&\;\; \delta \in \setMI_{\mathrm{O}}\\
      &&&&&\hspace*{-8.265cm}+\:\!\text{Sommerfeld's radiation} \text{ condition} \text{ if } \Omega_{\mathrm{O}} \text{ bounded}.
    \end{aligned}
  \end{cases}
\end{equation}
For simplicity, throughout the paper we will consider Dirichlet boundary conditions on all the obstacles, i.e., we impose \(\BoldtraceDirC[\mathrm{O}](u) = \symbvec{g}_{\mathrm{D}}\) for \(\symbvec{g}_{\mathrm{D}} \in \Pi_{j}  H^{1/2}(\Gamma_{\mathrm{O},j})\).
However, we emphasize that the results stated afterwards hold if different boundary conditions are applied on different obstacles, whether they are of Dirichlet, Neumann, Robin or even mixed Dirichlet-Neumann type. 
For simplicity, we also consider that the source term is restricted to the heterogeneous domain, that is to say \(f|_{\Omega_{\mathrm{B}}} = 0\). We denote \(\symbvec{f}_{\mathrm{F}} \coloneqq (f|_{\Omega_{\alpha}})_{\setMI_{\mathrm{F}}}\).

\section{Boundary integral operators}\label{sec:bios}

For \(k=1,\dots, N_{\mathrm{B}}\), denote \(x\mapsto \mathscr{G}_{\kappa_k}(x)\) the outgoing \emph{Green kernel} (or fundamental solution) for the Helmholtz operator with constant wavenumber \(\kappa_k>0\), satisfying \((\Delta + \kappa_k^2) (\mathscr{G}_{\kappa_k})(x) = \delta(x)\) in the sense of distributions, where \(\delta\) is the Dirac delta function. 
The Green kernel is given by \(\mathscr{G}_{\kappa_k}(x)\coloneqq \exp(\imath \kappa_k\vert x\vert)/(4\pi\vert x\vert)\) for \(d = 3\), and \(\mathscr{G}_{\kappa_k}(x)\coloneqq \imath H^{(1)}_{0}(\kappa_k\vert x\vert)/(4\pi)\) for \(d = 2\), where \(z\mapsto H^{(1)}_{0}(z)\) is the \(0\)-th order Hankel function of the first kind, see e.g.~\cite[Chap.10]{MR2723248}.
For any \(x\in \mathbb{R}^d\setminus\Gamma_{\mathrm{B},k}\), and sufficiently smooth traces $(v,p)$, define the \emph{total layer potential operator} by 
\begin{equation*}\label{LayerPotentialOperator}
  \GL_{\mathrm{B},k}(v,p)(x)\coloneqq
  \int_{\Gamma_{\mathrm{B},k}} \left( \symbvec{n}_{\mathrm{B},k} (y)\cdot(\nabla \mathscr{G}_{\kappa_k})(x-y)\, v(y)
  +\;\mathscr{G}_{\kappa_k}(x-y)\, p(y) \right) \diff s (y),
\end{equation*}
where \(\diff s\) refers to the Lebesgue surface measure on \(\Gamma_{\mathrm{B},k}\). 
The map \((v,p)\mapsto \GL_{\mathrm{B},k}(v,p)\vert_{\Omega_{\mathrm{B},k}}\) can be extended by density as a bounded linear operator \(H^{1/2}(\Gamma_{\mathrm{B},k})\times H^{-1/2}(\Gamma_{\mathrm{B},k}) \to H^{1}_{\mathrm{loc}}(\Delta,\overline{\Omega_{\mathrm{B},k}})\). 
For any pair \((v,p)\in H^{1/2}(\Gamma_{\mathrm{B},k})\times H^{-1/2}(\Gamma_{\mathrm{B},k})\), the function \(u \coloneqq \GL_{\mathrm{B},k}(v,p)\) is solution to the Helmholtz equation with wavenumber \(\kappa_k\) in \(\mathbb{R}^d \setminus \Gamma_{\mathrm{B},k}\), see e.g.~\cite[§2.4]{CoKr:bookIEM:1983}. Moreover, if \(\Omega_{\mathrm{B},k}\) is unbounded, the Sommerfeld's radiation condition is satisfied, see~\cite{Sommerfeld1912,MR29463,MR103705,MR1822275}.

Using the interior Dirichlet-Neumann trace map, we form the Calderón projector \(\CP_{\mathrm{B},k} \coloneqq \gamma^{\mathrm{B},k}\cdot\GL_{\mathrm{B},k}\colon H^{1/2}(\Gamma_{\mathrm{B},k})\times H^{-1/2}(\Gamma_{\mathrm{B},k}) \to H^{1/2}(\Gamma_{\mathrm{B},k}) \times H^{-1/2}(\Gamma_{\mathrm{B},k})\), which is commonly split as
\begin{equation*}\label{BIOP}
  \CP_{\mathrm{B},k} =
  \dfrac{1}{2}\begin{bmatrix}
    \Id & 0\\
    0 & \Id
  \end{bmatrix}
  +
  \begin{bmatrix}
    \K_{\mathrm{B},k} & \V_{\mathrm{B},k}\\
    \W_{\mathrm{B},k} & \Ktilde_{\mathrm{B},k}
  \end{bmatrix},
\end{equation*}
where the four classical Boundary Integral Operators (BIOs) appear: \(\K_{\mathrm{B},k}\colon H^{1/2}(\Gamma_{\mathrm{B},k})\to H^{1/2}(\Gamma_{\mathrm{B},k})\) (\emph{double layer}), \(\V_{\mathrm{B},k}\colon H^{-1/2}(\Gamma_{\mathrm{B},k})\to H^{1/2}(\Gamma_{\mathrm{B},k})\) (\emph{single layer}), \(\W_{\mathrm{B},k}\colon H^{1/2}(\Gamma_{\mathrm{B},k})\to H^{-1/2}(\Gamma_{\mathrm{B},k})\) (\emph{hypersingular}), and \(\Ktilde_{\mathrm{B},k}\colon H^{-1/2}(\Gamma_{\mathrm{B},k})\to \mH^{-1/2}(\Gamma_{\mathrm{B},k})\) (\emph{adjoint double layer}). Both the single layer and hypersingular operators are symmetric, while \(\K_{\mathrm{B},k}^* = - \Ktilde_{\mathrm{B},k}\). 
Besides, those operators are involved in the so-called \emph{Calderón equations} (see e.g.~\cite[Sect.~3.4]{SauterSchwab2011BEM}):
\begin{subequations}
  \begin{align}
    \left(\Id/2 - \K_{\mathrm{B},k} \right) \traceDir[\mathrm{B},k](u)      &= \V_{\mathrm{B},k} \traceNeu[\mathrm{B},k](u),  \label{eq:calderon_1} \\
    \left(\Id/2 - \Ktilde_{\mathrm{B},k} \right) \traceNeu[\mathrm{B},k](u) &= \W_{\mathrm{B},k}  \traceDir[\mathrm{B},k](u), \label{eq:calderon_2}
  \end{align}
\end{subequations}
where \(u \in H^1_{\mathrm{loc}}(\overline{\Omega_{\mathrm{B},k}})\) satisfies \(-\Delta u - \kappa_k^2 u = 0\) in \(\Omega_{\mathrm{B},k}\), and Sommerfeld's radiation condition if \(\Omega_{\mathrm{B},k}\) is unbounded.

\section{Multi-trace spaces}\label{sec:multitrace_spaces}

We aim to express equivalently Problem~\eqref{pbm:multi_domain_config} with a variational formulation similar to the one stated in~\cite[Section~9]{BoisneaultBonazzoliEtAl2026DFB}. To this end, we define the \emph{skeleton} \(\Sigma\) as the union of all subdomain interfaces:
\[
  \Sigma \coloneqq \Gamma_{\mathrm{O}} \cup \Gamma_{\mathrm{B}} \cup \Gamma_{\mathrm{F}}, \quad \text{with } \Gamma_\bullet \coloneqq \textstyle\bigcup_{\alpha \in \setMI_{\bullet}} \Gamma_{\alpha}.  
\]
Then, we introduce several multi-domain trace spaces on \(\Sigma\).
The \emph{multi-trace space} is defined by
\begin{equation*}
  \mathbb{H}(\Sigma) \coloneqq \mathbb{H}(\Gamma_{\mathrm{O}}) \times \mathbb{H}(\Gamma_{\mathrm{B}}) \times \mathbb{H}(\Gamma_{\mathrm{F}}),
\end{equation*}
where \(\mathbb{H}(\Gamma_{\mathrm{O}}) \coloneqq \Pi_{j} \mathbb{H}(\Gamma_{\mathrm{O},j})\), \(\mathbb{H}(\Gamma_{\mathrm{B}}) \coloneqq \Pi_{k} \mathbb{H}(\Gamma_{\mathrm{B},k})\) and \(\mathbb{H}(\Gamma_{\mathrm{F}}) \coloneqq \Pi_{l} \mathbb{H}(\Gamma_{\mathrm{F},l})\) are respectively the multi-trace spaces of obstacles, BEM and FEM subdomains, with \(\mathbb{H}(\Gamma_{\bullet, m}) \coloneqq H^{1/2}(\Gamma_{\bullet, m}) \times H^{-1/2}(\Gamma_{\bullet, m})\).
The space \(\mathbb{H}(\Sigma)\) is nothing more than the Cartesian product of local spaces of Dirichlet-Neumann traces.
We also define the \emph{Dirichlet} (resp.~\emph{Neumann}) \emph{multi-trace space}:
\[
\mathbb{H}_{\mathrm{D}}(\Sigma) \coloneqq \mathbb{H}_{\mathrm{D}}(\Gamma_{\mathrm{O}}) \times \mathbb{H}_{\mathrm{D}}(\Gamma_{\mathrm{B}}) \times \mathbb{H}_{\mathrm{D}}(\Gamma_{\mathrm{F}}), \quad
\mathbb{H}_{\mathrm{N}}(\Sigma) \coloneqq \mathbb{H}_{\mathrm{N}}(\Gamma_{\mathrm{O}}) \times \mathbb{H}_{\mathrm{N}}(\Gamma_{\mathrm{B}}) \times \mathbb{H}_{\mathrm{N}}(\Gamma_{\mathrm{F}}), 
\]
with \(\mathbb{H}_{\mathrm{D}}(\Gamma_{\bullet}) \coloneqq \Pi_{\alpha \in \setMI_{\bullet}} H^{1/2}(\Gamma_{\alpha})\),
\(\mathbb{H}_{\mathrm{N}}(\Gamma_{\bullet}) \coloneqq \Pi_{\alpha \in \setMI_{\bullet}} H^{-1/2}(\Gamma_{\alpha})\).  
For \((\symbvec{p},\symbvec{u})\in \mathbb{H}_{\mathrm{N}}(\Sigma) \times \mathbb{H}_{\mathrm{D}}(\Sigma)\), with \(\symbvec{p}=(\symbvec{p}_{\mathrm{O}}, \symbvec{p}_{\mathrm{B}}, \symbvec{p}_{\mathrm{F}})\) and \(\symbvec{u} = (\symbvec{u}_{\mathrm{O}}, \symbvec{u}_{\mathrm{B}}, \symbvec{u}_{\mathrm{F}})\), we define the inner product
\[
\llangle \symbvec{p}, \symbvec{u} \rrangle \coloneqq \textstyle\sum_{\bullet} \langle \symbvec{p}_{\bullet}, \symbvec{u}_{\bullet} \rangle,  
\quad \text{with } 
\langle \symbvec{p}_{\bullet}, \symbvec{u}_{\bullet} \rangle \coloneqq \textstyle\sum_{\alpha \in \setMI_{\bullet}}\langle p_{\alpha}, u_{\alpha}\rangle,
\]
where \(\langle \cdot,\cdot \rangle\) is the generic canonical duality pairing between a Banach or Hilbert space and its dual. More details about this pairing are given in the next paragraph.
Following~\cite{ClaeysHiptmair2014IEA}, we also introduce the \emph{Dirichlet} (resp.~\emph{Neumann}) \emph{single-trace space}, whose tuples satisfy Dirichlet (resp.~Neumann) transmission conditions on the skeleton \(\Sigma\):
\begin{align*}
    &\mathbb{X}_{\mathrm{D}}(\Sigma) \coloneqq 
    \left\{
        (\symbvec{u}_{\mathrm{O}}, \symbvec{u}_{\mathrm{B}}, \symbvec{u}_{\mathrm{F}}) \in \mathbb{H}_{\mathrm{D}}(\Sigma) \mid \exists u^* \in H^1(\mathbb{R}^d) \text{ such that } 
        \BoldtraceDir[\bullet](u^*) =  \symbvec{u}_{\bullet} 
    \right\} \\
    &\mathbb{X}_{\mathrm{N}}(\Sigma) \coloneqq 
    \left\{
        (\symbvec{p}_{\mathrm{O}}, \symbvec{p}_{\mathrm{B}}, \symbvec{p}_{\mathrm{F}}) \in \mathbb{H}_{\mathrm{N}}(\Sigma) \mid \exists p^* \in H(\Div,\mathbb{R}^d) \text{ such that } 
        \BoldtraceDiv[\bullet](p^*) = \symbvec{p}_{\bullet} 
    \right\}
\end{align*}

\paragraph{A characterization of the single-trace spaces}

The \emph{annihilator} or \emph{polar set} (see, e.g.~\cite[§4.6]{Rudin1991book},~\cite[§1.3]{Brezis2011FAS} and~\cite[§4.1.4--4.1.5]{BoffiBrezziEtAl2013MFE}) of any subset \(\mX\) of a Banach space \( \mH\) is defined by
\begin{equation*}\label{DefPolarSet}
  \mX^{\circ} \coloneq\{ \varphi\in \mH^*,\langle \varphi,v\rangle = 0\; \text{ for all } v\in \mX\},
\end{equation*}
where the canonical duality pairing between \(H\) and its topological dual \(H^*\) is denoted \(\langle \cdot,\cdot\rangle \colon \mH^{*}\times\mH\to \mathbb{C}\) and defined by $\langle \varphi, v\rangle\coloneq \varphi(v)$. We emphasize that the duality pairings we consider do \emph{not} involve any complex conjugation, allowing us to write indifferently \(\langle v,\varphi\rangle = \langle \varphi, v\rangle\), for \( v\in\mH, \varphi\in \mH^*\).
The next proposition established that each single-trace space defined above can be characterized as the annihilator of the other. We refer to~\cite[Proposition~2.1]{Claeys2011STI} and~\cite[Theorem~3.1]{ClaeysHiptmairEtAl2013MBI} for a proof.

\begin{proposition}\label{prop:annihilators}
  \(\mathbb{X}_{\mathrm{N}}(\Sigma) = \mathbb{X}_{\mathrm{D}}(\Sigma)^{\circ}\), i.e.\
  \(\symbvec{p} \in \mathbb{X}_{\mathrm{N}}(\Sigma) \iff \llangle \symbvec{p}, \symbvec{v} \rrangle = 0, \ \forall \symbvec{v} \in \mathbb{X}_{\mathrm{D}}(\Sigma)\).

  \hspace{0.49cm}Similarly, \(\mathbb{X}_{\mathrm{D}}(\Sigma) = \mathbb{X}_{\mathrm{N}}(\Sigma)^{\circ}\), i.e.\ \(\symbvec{u} \in \mathbb{X}_{\mathrm{D}}(\Sigma) \iff \llangle \symbvec{q}, \symbvec{u} \rrangle = 0, \ \forall \symbvec{q} \in \mathbb{X}_{\mathrm{N}}(\Sigma)\).
\end{proposition}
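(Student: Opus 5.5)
We first prove the two easy inclusions with a global Green formula, and then obtain the reverse inclusions by a lifting argument. The domains \(\Omega_\alpha\), \(\alpha\in\setMI_{\mathrm{O}}\cup\setMI_{\mathrm{B}}\cup\setMI_{\mathrm{F}}\), partition \(\mathbb{R}^d\) up to the null set \(\Sigma\).

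\emph{Step 1: \(\mathbb{X}_{\mathrm{N}}(\Sigma)\subset\mathbb{X}_{\mathrm{D}}(\Sigma)^\circ\).} Take \(\symbvec{p}\in\mathbb{X}_{\mathrm{N}}(\Sigma)\) with lifting \(p^*\in H(\Div,\mathbb{R}^d)\), and \(\symbvec{u}\in\mathbb{X}_{\mathrm{D}}(\Sigma)\) with lifting \(u^*\in H^1(\mathbb{R}^d)\). Since \(\Sigma\) is bounded, we may multiply \(u^*\) by a cutoff \(\chi\in\mathscr{C}^\infty_c(\mathbb{R}^d)\) with \(\chi=1\) near \(\Sigma\); this does not change the traces. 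On each \(\Omega_\alpha\) the Green formula gives
\[
\langle \traceDiv[\alpha](p^*),\traceDir[\alpha](u^*)\rangle=\int_{\Omega_\alpha}\bigl(\Div p^*\,u^*+p^*\cdot\nabla u^*\bigr)\diff x .
\]
The integrand is integrable because \(u^*\) has compact support. Summing over all \(\alpha\) yields \(\llangle\symbvec{p},\symbvec{u}\rrangle=\int_{\mathbb{R}^d}\Div(p^*u^*)\diff x=0\). The last equality holds because \(p^*u^*\in W^{1,1}\)-type with compact support, or, more simply, by density of \(\mathscr{C}^\infty_c\) in \(H(\Div,\mathbb{R}^d)\). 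This also gives \(\mathbb{X}_{\mathrm{D}}(\Sigma)\subset\mathbb{X}_{\mathrm{N}}(\Sigma)^\circ\).

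\emph{Step 2: \(\mathbb{X}_{\mathrm{D}}(\Sigma)^\circ\subset\mathbb{X}_{\mathrm{N}}(\Sigma)\).} Let \(\symbvec{p}\) annihilate \(\mathbb{X}_{\mathrm{D}}(\Sigma)\). In each subdomain we solve a local Neumann problem for \(-\Delta w_\alpha+w_\alpha=0\) with \(\traceNeu[\alpha]w_\alpha=p_\alpha\). This is well posed in \(H^1(\Omega_\alpha)\) even for the unbounded domains, thanks to the coercive term \(+w\). We then set \(p^*|_{\Omega_\alpha}=\nabla w_\alpha\), so that \(\Div p^*|_{\Omega_\alpha}=w_\alpha\), and \(p^*\in L^2\) piecewise with piecewise \(L^2\) divergence. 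To show \(p^*\in H(\Div,\mathbb{R}^d)\) with divergence equal to the piecewise one, we test against \(\varphi\in\mathscr{C}^\infty_c(\mathbb{R}^d)\). Applying Green piecewise gives
\[
\int_{\mathbb{R}^d}\bigl(p^*\cdot\nabla\varphi+w\varphi\bigr)\diff x=\llangle\symbvec{p},\BoldtraceDir(\varphi)\rrangle=0,
\]
since \(\BoldtraceDir(\varphi)\in\mathbb{X}_{\mathrm{D}}(\Sigma)\). Then \(\traceDiv[\alpha](p^*)=p_\alpha\) by construction, so \(\symbvec{p}\in\mathbb{X}_{\mathrm{N}}(\Sigma)\).

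\emph{Step 3: \(\mathbb{X}_{\mathrm{N}}(\Sigma)^\circ\subset\mathbb{X}_{\mathrm{D}}(\Sigma)\).} This is the main obstacle, because gluing Dirichlet data is harder than gluing fluxes. Given \(\symbvec{u}\) annihilating \(\mathbb{X}_{\mathrm{N}}(\Sigma)\), we solve local Dirichlet problems \(-\Delta w_\alpha+w_\alpha=0\), \(\traceDir[\alpha]w_\alpha=u_\alpha\), and define \(w\) piecewise. Then \(w\) is piecewise \(H^1\), and we must show that its distributional gradient has no jump part. For \(\Phi\in\mathscr{C}^\infty_c(\mathbb{R}^d)^d\), piecewise Green gives
\[
\int_{\mathbb{R}^d}\bigl(w\Div\Phi+\nabla_{\mathrm{pw}}w\cdot\Phi\bigr)\diff x=\llangle\BoldtraceDiv(\Phi),\symbvec{u}\rrangle=0,
\]
since \(\BoldtraceDiv(\Phi)\in\mathbb{X}_{\mathrm{N}}(\Sigma)\). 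Hence \(\nabla w=\nabla_{\mathrm{pw}}w\in L^2\), so \(w\in H^1(\mathbb{R}^d)\) and \(\BoldtraceDir(w)=\symbvec{u}\). Alternatively, the closed range theorem would give these inclusions abstractly, but the constructive route avoids needing closedness of the ranges of the trace operators. The care needed concerns the unbounded subdomain and obstacle (handled through \(H^1\) rather than \(H^1_{\mathrm{loc}}\) local solves) and the convention that \(\traceDiv\) uses the outward normal of each piece.
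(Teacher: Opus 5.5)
Your proof is correct and follows the argument of the references the paper cites in place of a proof (\cite[Proposition~2.1]{Claeys2011STI}, \cite[Theorem~3.1]{ClaeysHiptmairEtAl2013MBI}). There, as in your proposal, Green's formula on each subdomain gives the easy inclusions, and the reverse inclusions come from gluing subdomainwise liftings by testing against $\mathscr{C}^\infty_c(\mathbb{R}^d)$ functions; the paper uses the same device in its proof of Proposition~\ref{prop:dirichlet_annihilator}. Two minor remarks: the local PDE solves are just one convenient choice of lifting (surjectivity of $\traceDir[\alpha]$ and $\traceDiv[\alpha]$ suffices), and your Step~3 is no harder than Step~2, since the two arguments are symmetric.
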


Using the single-trace spaces, a characterization of the Dirichlet and Neumann traces on the obstacle \(\Omega_{\mathrm{O}}\) is stated in the next two propositions.
\begin{proposition}\label{prop:dirichlet_annihilator}
  Let \(\Phi \in L_{\mathrm{loc}}^2(\overline{\Omega_{\mathrm{O}}^c})^d\) such that \(\Phi |_{\Omega_{\alpha}} \in H(\Div,\Omega_{\alpha})\) and \(\Phi |_{\Omega_{\beta}} \in H_{\mathrm{loc}}(\Div,\overline{\Omega_{\beta}})\), for all \(\alpha \in \setMI_{\mathrm{F}}\) and \(\beta \in \setMI_{\mathrm{B}}\).  
  Let \(\symbvec{p}_{\mathrm{O}} \in \mathbb{H}_{\mathrm{N}}(\Gamma_{\mathrm{O}})\) and define \(\symbvec{p} \coloneqq \bigl(\symbvec{p}_{\mathrm{O}}, \BoldtraceDiv[\mathrm{B}](\Phi), \BoldtraceDiv[\mathrm{F}](\Phi)\bigr)\). It holds
  \begin{equation*}
    \llangle \symbvec{p}, \symbvec{v} \rrangle = 0 \ \forall \bm{v} \in \mathbb{X}_{\mathrm{D}}(\Sigma)
    \iff
    \begin{cases}
      \Phi \in H_{\mathrm{loc}}(\Div,\overline{\Omega_{\mathrm{O}}^c}), \\
      \BoldtraceDivC[\mathrm{O}](\Phi) = \symbvec{p}_{\mathrm{O}}.
    \end{cases}
  \end{equation*}
\end{proposition}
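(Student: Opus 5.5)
The plan is to reduce both directions to a single Green-formula computation against functions of $H^1(\mathbb{R}^d)$, and then to use Proposition~\ref{prop:annihilators} only where we need to build a global field.

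\textbf{Direction ($\Leftarrow$).} Let $\symbvec{v}\in\mathbb{X}_{\mathrm{D}}(\Sigma)$ and pick $v^*\in H^1(\mathbb{R}^d)$ with $\BoldtraceDir[\bullet](v^*)=\symbvec{v}_\bullet$ for every $\bullet$. Since $\Sigma$ is bounded, we may multiply $v^*$ by a smooth cutoff $\chi$ equal to $1$ near $\Sigma$. This leaves all traces unchanged, and so we may assume $v^*$ has compact support. On each $\Omega_\alpha$, $\alpha\in\setMI_{\mathrm{B}}\cup\setMI_{\mathrm{F}}$, Green's formula gives
\[
\langle \traceDiv[\alpha](\Phi),\traceDir[\alpha](v^*)\rangle=\int_{\Omega_\alpha}\bigl(\Div\Phi\, v^*+\Phi\cdot\nabla v^*\bigr).
\]
This holds also for unbounded $\Omega_\beta$, because $v^*$ has compact support. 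Summing over $\alpha$ yields $\int_{\Omega_{\mathrm{O}}^c}(\Div\Phi\,v^*+\Phi\cdot\nabla v^*)$, where $\Div\Phi$ is now understood globally since $\Phi\in H_{\mathrm{loc}}(\Div,\overline{\Omega_{\mathrm{O}}^c})$. Applying Green's formula on $\Omega_{\mathrm{O}}^c$ identifies this integral with $-\sum_j\langle \traceDivC[\mathrm{O},j](\Phi),\traceDir[\mathrm{O},j](v^*)\rangle$. The minus sign comes from the fact that $\symbvec{n}_{\mathrm{O},j}$ points into $\Omega_{\mathrm{O}}^c$. The obstacle component of $\llangle\symbvec p,\symbvec v\rrangle$ is $\sum_j\langle p_{\mathrm{O},j},v_{\mathrm{O},j}\rangle$, and with $p_{\mathrm{O},j}=\traceDivC[\mathrm{O},j](\Phi)$ it cancels this term exactly, so $\llangle\symbvec p,\symbvec v\rrangle=0$. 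I will fix the sign convention for $\traceDivC$ carefully at this point, since the cancellation depends on it.

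\textbf{Direction ($\Rightarrow$).} First take $\varphi\in\mathscr{C}^\infty_c(\Omega_{\mathrm{O}}^c)$, extended by zero. Then $(\BoldtraceDir[\bullet](\varphi))_\bullet\in\mathbb{X}_{\mathrm{D}}(\Sigma)$ and its obstacle components vanish. The hypothesis together with the same Green identity on each $\Omega_\alpha$ gives
\[
\int_{\Omega_{\mathrm{O}}^c}\Phi\cdot\nabla\varphi=-\int_{\Omega_{\mathrm{O}}^c} g\,\varphi, \qquad g\coloneqq \textstyle\sum_\alpha (\Div\Phi|_{\Omega_\alpha})\mathbf 1_{\Omega_\alpha}\in L^2_{\mathrm{loc}}.
\]
Hence the distributional divergence of $\Phi$ on $\Omega_{\mathrm{O}}^c$ is $g$, and therefore $\Phi\in H_{\mathrm{loc}}(\Div,\overline{\Omega_{\mathrm{O}}^c})$.

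Next we identify the obstacle traces. Take an arbitrary $\symbvec v\in\mathbb{X}_{\mathrm{D}}(\Sigma)$ with compactly supported lifting $v^*$. The computation from the converse direction, now legitimate because $\Phi$ lies in the global space, rewrites the hypothesis as
\[
\textstyle\sum_j\langle p_{\mathrm{O},j}-\traceDivC[\mathrm{O},j](\Phi),\traceDir[\mathrm{O},j](v^*)\rangle=0.
\]
It remains to see that the tuples $(\traceDir[\mathrm{O},j](v^*))_j$ exhaust $\Pi_jH^{1/2}(\Gamma_{\mathrm{O},j})$. Since the closures $\overline{\Omega_{\mathrm{O},j}}$ are pairwise disjoint and each $\Gamma_{\mathrm{O},j}$ is bounded, we can lift each $w_j\in H^{1/2}(\Gamma_{\mathrm{O},j})$ to a function in $H^1(\mathbb{R}^d)$ supported near $\Gamma_{\mathrm{O},j}$ and away from the other obstacles, using the trace right-inverse and a cutoff. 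Summing these lifts gives the surjectivity, and therefore $\BoldtraceDivC[\mathrm{O}](\Phi)=\symbvec p_{\mathrm{O}}$.

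The main obstacle I expect is justifying Green's formula on the pieces with only local regularity. This concerns the unbounded $\Omega_{\mathrm{B},1}$, or, in the bounded-geometry case, the domains adjacent to an unbounded obstacle. The cutoff argument handles it, but only after checking that multiplying by $\chi$ does not change any trace on $\Sigma$.
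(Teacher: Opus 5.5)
Your proof is correct, but for ($\Leftarrow$) and for the identification of the obstacle traces you take a different route from the paper.

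The first half of ($\Rightarrow$) is essentially the paper's argument. You test with functions vanishing near $\Gamma_{\mathrm{O}}$ to identify the distributional divergence. Your choice of $\mathscr{C}^\infty_c(\Omega_{\mathrm{O}}^c)$ is cleaner than the paper's $H^1_{\mathrm{comp}}(\Omega_{\mathrm{O}}^c)$, whose elements must implicitly have zero trace on $\Gamma_{\mathrm{O}}$.

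The paper never applies Green's formula on $\Omega_{\mathrm{O}}^c$ as a whole. For ($\Leftarrow$) it proceeds as follows:
\begin{itemize}
\item it lifts $\symbvec{p}_{\mathrm{O}}$ to a field $\Psi \in H_{\mathrm{loc}}(\Div,\overline{\Omega_{\mathrm{O}}})$, using surjectivity of $\BoldtraceDiv[\mathrm{O}]$;
\item it glues $\Psi$ to $\Phi$, each extended by zero, into an element of $H_{\mathrm{loc}}(\Div,\mathbb{R}^d)$, and cuts it off to a field in $H(\Div,\mathbb{R}^d)$ whose normal traces are exactly $\symbvec{p}$;
\item it concludes $\symbvec{p}\in\mathbb{X}_{\mathrm{N}}(\Sigma)$ from the definition and invokes Proposition~\ref{prop:annihilators}.
\end{itemize}
In ($\Rightarrow$), once $\Phi\in H_{\mathrm{loc}}(\Div,\overline{\Omega_{\mathrm{O}}^c})$ is known, the paper reruns this construction with $\symbvec{p}_{\mathrm{O}}$ replaced by $\BoldtraceDivC[\mathrm{O}](\Phi)$. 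It then subtracts the two resulting annihilation identities.

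Your direct computation sums the local Green identities and applies one Green formula on the Lipschitz domain $\Omega_{\mathrm{O}}^c$, with the correct sign coming from $\symbvec{n}_{\mathrm{O},j}$. This is more elementary. It needs neither the normal-trace lifting inside possibly unbounded obstacles, nor the gluing fact that piecewise $H(\Div)$ fields with matching normal traces across $\Gamma_{\mathrm{O}}$ are globally $H(\Div)$, nor Proposition~\ref{prop:annihilators}.

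The paper's route, in exchange, exhibits an explicit global $H(\Div,\mathbb{R}^d)$ field realizing $\symbvec{p}$. It thus stays inside the single-trace framework used throughout, and reuses one construction for both directions.

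Both arguments end with the same surjectivity of the Dirichlet trace onto $\Pi_j H^{1/2}(\Gamma_{\mathrm{O},j})$, which you justify more explicitly than the paper. Your cutoff condition, $\chi=1$ in a neighborhood of $\Sigma$, is also the precise requirement behind the paper's looser ``$\supp(\chi)\supset\Sigma$''.
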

\begin{proof}
  (\(\Longleftarrow\)) We simply need to establish that \(\symbvec{p} \in \mathbb{X}_{\mathrm{N}}(\Sigma)\) to conclude.
  On one hand, we extend \(\Phi\) by \(0\) in \(\Omega_{\mathrm{O}}\), so that \(\Phi \in L^2_{\mathrm{loc}}(\mathbb{R}^d)^d\).
  On the other hand, there exists \(\Psi \in H_{\mathrm{loc}}(\Div,\overline{\Omega_{\mathrm{O}}})\) such that \(\BoldtraceDiv[\mathrm{O}](\Psi) = \symbvec{p}_{\mathrm{O}}\), because \(\BoldtraceDiv[\mathrm{O}]\) is onto. 
  Extending \(\Psi\) by \(0\) in \(\Omega_{\mathrm{O}}^c\), it comes \(\Phi + \Psi \in H_{\mathrm{loc}}(\Div,\mathbb{R}^d)\).

  Next, we consider \(\chi \in \mathcal{C}^{\infty}_C(\mathbb{R}^d)\) such that \(\supp(\chi) \supset \Sigma\). Note that such a choice is possible because \(\Sigma\) is bounded.
  Therefore, \(V = \chi(\Phi + \Psi)\) belongs to \(H(\Div,\mathbb{R}^d)\). Eventually, \(\symbvec{p} = \bigl(\BoldtraceDiv[\mathrm{O}](V), \BoldtraceDiv[\mathrm{B}](V), \BoldtraceDiv[\mathrm{F}](V)\bigr)\) belongs to \(\mathbb{X}_{\mathrm{N}}(\Sigma)\).

  (\(\Longrightarrow\))
  First, we show that \(\Phi \in H_{\mathrm{loc}}(\Div,\overline{\Omega_{\mathrm{O}}^c})\). 
  Let \(\varphi \in H^1_{\mathrm{comp}}(\Omega_{\mathrm{O}}^c)\), where \(H^1_{\mathrm{comp}}(\Omega_{\mathrm{O}}^c)\) denotes the elements of \(H^1(\Omega_{\mathrm{O}}^c)\) with bounded support.
  The vector
  \(
  \symbvec{v} \coloneqq 
  \bigl(
    \BoldtraceDirC[\mathrm{O}](\varphi), \BoldtraceDir[\mathrm{B}](\varphi), \BoldtraceDir[\mathrm{F}](\varphi)
  \bigr) 
  = 
  \bigl(
    \symbvec{0}_{\mathrm{O}}, \BoldtraceDir[\mathrm{B}](\varphi), \BoldtraceDir[\mathrm{F}](\varphi)
  \bigr)
  \) belongs to \(\mathbb{X}_{\mathrm{D}}(\Sigma)\), the proof relying on computations similar to those used in the first part (\(\Longleftarrow\))  of the proof.
  The relation \(0 = \llangle \symbvec{v}, \symbvec{p} \rrangle\) becomes
  \begin{equation}\label{eq:specific_polar}
    0 = \langle \BoldtraceDir[\mathrm{F}](\varphi), \symbvec{p}_{\mathrm{F}} \rangle
    + \langle \BoldtraceDir[\mathrm{B}](\varphi), \symbvec{p}_{\mathrm{B}} \rangle
    =
    \langle \BoldtraceDir[\mathrm{F}](\varphi),  \BoldtraceDiv[\mathrm{F}](\Phi) \rangle
    + \langle \BoldtraceDir[\mathrm{B}](\varphi),  \BoldtraceDiv[\mathrm{B}](\Phi) \rangle.
  \end{equation}
  Next, define \(q^* \in L^2(\Omega_{\mathrm{O}}^c)\) by \(q^*|_{\Omega_{\alpha}} = \Div(\Phi |_{\Omega_{\alpha}})\) for \(\alpha \in \setMI_{\mathrm{F}} \cup \setMI_{\mathrm{B}}\).
  Using Equation~\eqref{eq:specific_polar} and Green's first identity, we establish that \(p^* \in H_{\mathrm{loc}}(\Div,\overline{\Omega_{\mathrm{O}}^c})\):
  \begin{equation*}
    \begin{split}
    -\int_{\Omega_{\mathrm{O}}^c} \nabla \varphi \cdot \Phi 
    &= 
    - \sum_{\alpha \in \setMI_{\mathrm{F}}} \int_{\Omega_{\alpha}} \nabla \varphi |_{\Omega_{\alpha}} \cdot \Phi |_{\Omega_{\alpha}}
    - \sum_{\beta \in \setMI_{\mathrm{B}}} \int_{\Omega_{\beta}} \nabla \varphi |_{\Omega_{\beta}} \cdot \Phi |_{\Omega_{\beta}} \\
    &=
    - \sum_{\alpha \in \setMI_{\mathrm{F}}} \int_{\Omega_{\alpha}} \nabla \varphi |_{\Omega_{\alpha}} \cdot \Phi |_{\Omega_{\alpha}}
    - \sum_{\beta \in \setMI_{\mathrm{B}}} \int_{\Omega_{\beta}} \nabla \varphi |_{\Omega_{\beta}} \cdot \Phi |_{\Omega_{\beta}} \\
    &=
    \sum_{\alpha \in \setMI_{\mathrm{F}}} \int_{\Omega_{\alpha}} \varphi |_{\Omega_{\alpha}} \Div(\Phi |_{\Omega_{\alpha}})
    + \sum_{\beta \in \setMI_{\mathrm{B}}} \int_{\Omega_{\beta}} \varphi |_{\Omega_{\beta}} \Div(\Phi |_{\Omega_{\beta}})
    - \langle \BoldtraceDir[\mathrm{F}](\varphi),  \BoldtraceDiv[\mathrm{F}](\Phi) \rangle
    - \langle \BoldtraceDir[\mathrm{B}](\varphi), \ \BoldtraceDiv[\mathrm{B}](\Phi) \rangle \\
    &=
    \sum_{\alpha \in \setMI_{\mathrm{F}}} \int_{\Omega_{\alpha}} \varphi |_{\Omega_{\alpha}} \Div(\Phi |_{\Omega_{\alpha}})
    + \sum_{\beta \in \setMI_{\mathrm{B}}} \int_{\Omega_{\beta}} \varphi |_{\Omega_{\beta}} \Div(\Phi |_{\Omega_{\beta}})
    = \int_{\Omega_{\mathrm{O}}^c} \varphi\; q^*.
    \end{split}
  \end{equation*}

  To end the proof, it remains to show that \(\BoldtraceDivC[\mathrm{O}](\Phi) = \symbvec{p}_{\mathrm{O}} \).
  Let \(v^* \in H^1(\mathbb{R}^d)\). By definition \( \bigl(\BoldtraceDir[\mathrm{O}](v^*), \BoldtraceDir[\mathrm{B}](v^*), \BoldtraceDir[\mathrm{F}](v^*)\bigr) \in \mathbb{X}_{\mathrm{D}}(\Sigma)\). The third hypothesis of the proposition leads to
  \begin{equation*}
    \langle \BoldtraceDir[\mathrm{O}](v^*), \symbvec{p}_{\mathrm{O}} \rangle
    + \langle \BoldtraceDir[\mathrm{B}](v^*), \BoldtraceDiv[\mathrm{B}](\Phi) \rangle
    + \langle \BoldtraceDir[\mathrm{F}](v^*), \BoldtraceDiv[\mathrm{F}](\Phi) \rangle
    = 0.
  \end{equation*}  
  Additionally, in the first part (\(\Longleftarrow\)) of the proof, we have shown that \(\bigl(\BoldtraceDivC[\mathrm{O}](\Phi), \BoldtraceDiv[\mathrm{B}](\Phi), \BoldtraceDiv[\mathrm{F}](\Phi)\bigr) \in \mathbb{X}_{\mathrm{N}}(\Sigma)\). So, we derive
  \[
  \langle \BoldtraceDir[\mathrm{O}](v^*), \BoldtraceDivC[\mathrm{O}](\Phi) \rangle
    + \langle \BoldtraceDir[\mathrm{B}](v^*), \BoldtraceDiv[\mathrm{B}](\Phi) \rangle
    + \langle \BoldtraceDir[\mathrm{F}](v^*), \BoldtraceDiv[\mathrm{F}](\Phi) \rangle = 0.
  \]
  Therefore, \( \langle \BoldtraceDir[\mathrm{O}](v^*),\BoldtraceDivC[\mathrm{O}](p^*) - \symbvec{p}_{\mathrm{O}} \rangle = 0\) for all \( v^* \in H^1(\mathbb{R}^d)\). 
  Recalling that \(\Gamma_{\mathrm{O}}\) is bounded, we know that for any given \(\symbvec{v}_{\mathrm{O}} \in \mathbb{H}_{\mathrm{D}}(\Gamma_{\mathrm{O}})\), there exists \(v^* \in H^1(\mathbb{R}^d)\) such that \( \BoldtraceDir[\mathrm{O}](v^*) = \symbvec{v}_{\mathrm{O}}\).
  Eventually, \(\BoldtraceDivC[\mathrm{O}](p^*) = \symbvec{p}_{\mathrm{O}} \).

\end{proof}
\begin{proposition}\label{prop:neumann_annihilator}
  Let \(w \in L^2_{\mathrm{loc}}(\overline{\Omega_{\mathrm{O}}^c})\) such that \(w |_{\Omega_{\alpha}} \in H^1(\Omega_{\alpha})\) and \(w |_{\Omega_{\beta}} \in H^1_{\mathrm{loc}}(\overline{\Omega_{\beta}})\), for all \(\alpha \in \setMI_{\mathrm{F}}\) and \(\beta \in \setMI_{\mathrm{B}}\).
  Let \(\symbvec{u}_{\mathrm{O}} \in \mathbb{H}_{\mathrm{D}}(\Gamma_{\mathrm{O}})\), and define \(\symbvec{u} \coloneqq \bigl(\symbvec{u}_{\mathrm{O}}, \BoldtraceDir[\mathrm{B}](w), \BoldtraceDir[\mathrm{F}](w)\bigr)\). It holds
  \begin{equation*}
    \llangle \symbvec{q}, \symbvec{u} \rrangle = 0 \ \forall \symbvec{q} \in \mathbb{X}_{\mathrm{N}}(\Sigma)
    \iff
    \begin{cases}
      w \in H^1_{\mathrm{loc}}(\overline{\Omega_{\mathrm{O}}^c}), \\
      \BoldtraceDirC[\mathrm{O}](w) = \symbvec{u}_{\mathrm{O}}.
    \end{cases}
  \end{equation*}
\end{proposition}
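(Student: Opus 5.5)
The proof follows the structure of Proposition~\ref{prop:dirichlet_annihilator}, with the roles of Dirichlet and Neumann traces (and of \(H^1\) and \(H(\Div)\)) exchanged, and using the characterization \(\mathbb{X}_{\mathrm{D}}(\Sigma) = \mathbb{X}_{\mathrm{N}}(\Sigma)^{\circ}\) from Proposition~\ref{prop:annihilators}.

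\emph{Direction} (\(\Longleftarrow\)). It suffices to show that \(\symbvec{u} \in \mathbb{X}_{\mathrm{D}}(\Sigma)\), since Proposition~\ref{prop:annihilators} then gives the conclusion.

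1. Since \(\Gamma_{\mathrm{O}}\) is bounded and each \(\traceDir[\mathrm{O},j]\) is onto, pick \(\psi \in H^1_{\mathrm{loc}}(\overline{\Omega_{\mathrm{O}}})\) with \(\BoldtraceDir[\mathrm{O}](\psi) = \symbvec{u}_{\mathrm{O}}\). This can be done componentwise on each \(\Omega_{\mathrm{O},j}\), because their closures are disjoint.
2. Glue \(\psi\) in \(\Omega_{\mathrm{O}}\) with \(w\) in \(\Omega_{\mathrm{O}}^c\). The glued function lies in \(H^1_{\mathrm{loc}}(\mathbb{R}^d)\): the interior and exterior Dirichlet traces on \(\Gamma_{\mathrm{O}}\) coincide, both being equal to \(\symbvec{u}_{\mathrm{O}}\).
3. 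Multiply by a cutoff \(\chi \in \mathscr{C}^\infty_c(\mathbb{R}^d)\) with \(\chi = 1\) on a neighbourhood of the bounded set \(\Sigma\). This yields \(u^* \in H^1(\mathbb{R}^d)\) whose traces are exactly \(\symbvec{u}\).

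\emph{Direction} (\(\Longrightarrow\)). First I show that \(w \in H^1_{\mathrm{loc}}(\overline{\Omega_{\mathrm{O}}^c})\), i.e.\ that its distributional gradient on \(\Omega_{\mathrm{O}}^c\) is the piecewise gradient \(G\), where \(G|_{\Omega_\alpha} = \nabla(w|_{\Omega_\alpha})\).

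1. Take \(\Phi \in \mathscr{C}^\infty_c(\Omega_{\mathrm{O}}^c)^d\). Then \(\symbvec{q} \coloneqq (\symbvec{0}_{\mathrm{O}}, \BoldtraceDiv[\mathrm{B}](\Phi), \BoldtraceDiv[\mathrm{F}](\Phi))\) is the trace tuple of \(\Phi\) extended by zero, which lies in \(H(\Div,\mathbb{R}^d)\). Hence \(\symbvec{q} \in \mathbb{X}_{\mathrm{N}}(\Sigma)\).
2. The hypothesis gives \(\langle \BoldtraceDiv[\mathrm{B}](\Phi), \BoldtraceDir[\mathrm{B}](w)\rangle + \langle \BoldtraceDiv[\mathrm{F}](\Phi), \BoldtraceDir[\mathrm{F}](w)\rangle = 0\).
3. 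Apply Green's formula subdomain by subdomain (in each \(\Omega_\beta\), \(\beta \in \setMI_{\mathrm{B}}\), possibly unbounded, this is legitimate since \(\Phi\) has compact support). This yields \(\int_{\Omega_{\mathrm{O}}^c} w \Div \Phi = -\int_{\Omega_{\mathrm{O}}^c} G\cdot\Phi\). Thus \(\nabla w = G \in L^2_{\mathrm{loc}}\), and \(w \in H^1_{\mathrm{loc}}(\overline{\Omega_{\mathrm{O}}^c})\).

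It remains to identify the exterior trace on \(\Gamma_{\mathrm{O}}\).

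1. By the (\(\Longleftarrow\)) direction applied with \(\BoldtraceDirC[\mathrm{O}](w)\) in place of \(\symbvec{u}_{\mathrm{O}}\), the tuple \((\BoldtraceDirC[\mathrm{O}](w), \BoldtraceDir[\mathrm{B}](w), \BoldtraceDir[\mathrm{F}](w))\) belongs to \(\mathbb{X}_{\mathrm{D}}(\Sigma)\). It is therefore annihilated by all \(\symbvec{q} \in \mathbb{X}_{\mathrm{N}}(\Sigma)\), by Proposition~\ref{prop:annihilators}.
2. Subtracting this from the hypothesis gives \(\langle \BoldtraceDiv[\mathrm{O}](p^*), \BoldtraceDirC[\mathrm{O}](w) - \symbvec{u}_{\mathrm{O}}\rangle = 0\) for every \(p^* \in H(\Div,\mathbb{R}^d)\).
3. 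Since \(\Gamma_{\mathrm{O}}\) is bounded, every \(\symbvec{q}_{\mathrm{O}} \in \mathbb{H}_{\mathrm{N}}(\Gamma_{\mathrm{O}})\) is the normal trace of some \(p^* \in H(\Div,\mathbb{R}^d)\). To see this, lift \(\symbvec{q}_{\mathrm{O}}\) into \(H(\Div)\) of a bounded neighbourhood inside \(\Omega_{\mathrm{O}}\) and extend by zero outside; the normal traces on the other interfaces are irrelevant. Hence \(\BoldtraceDirC[\mathrm{O}](w) = \symbvec{u}_{\mathrm{O}}\), by nondegeneracy of the duality pairing.

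\emph{Main obstacle.} The delicate step is the surjectivity claim in step 3. One must produce, for an arbitrary \(\symbvec{q}_{\mathrm{O}}\), a field \(p^* \in H(\Div,\mathbb{R}^d)\) with \(\BoldtraceDiv[\mathrm{O}](p^*) = \symbvec{q}_{\mathrm{O}}\). Extending an \(H(\Div)\) field by zero across \(\Gamma_{\mathrm{O}}\) creates a jump in normal component, so the zero extension is not in \(H(\Div,\mathbb{R}^d)\). I would instead take \(p^* = \nabla z\), where \(z\) solves a Neumann problem with data \(\symbvec{q}_{\mathrm{O}}\) in a bounded Lipschitz region adjacent to \(\Gamma_{\mathrm{O}}\) in \(\Omega_{\mathrm{O}}\), and solve a matching problem on the other side so that the normal components agree across \(\Gamma_{\mathrm{O}}\). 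The simplest form is to solve \(-\Delta z + z = 0\) in \(\mathbb{R}^d\setminus\Gamma_{\mathrm{O}}\), with prescribed continuous normal derivative \(\symbvec{q}_{\mathrm{O}}\) on both sides, and then apply a cutoff.
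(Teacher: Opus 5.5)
Your proof is correct and is precisely the Dirichlet/Neumann dual of the paper's proof of Proposition~\ref{prop:dirichlet_annihilator}; the paper states this proposition without a separate proof, leaving it as that counterpart. The ingredients match: gluing plus cutoff for ($\Longleftarrow$), testing with fields whose $\Gamma_{\mathrm{O}}$-component vanishes plus Green's formula for the regularity, and subtraction plus surjectivity onto $\mathbb{H}_{\mathrm{N}}(\Gamma_{\mathrm{O}})$ for the trace on $\Gamma_{\mathrm{O}}$. The one thing to fix is the sentence in step~3 claiming that a lift inside $\Omega_{\mathrm{O}}$ extended by zero lies in $H(\Div,\mathbb{R}^d)$, which is false, as you note yourself. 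Replace it by your two-sided construction: solve $-\Delta z+z=0$ separately in $\Omega_{\mathrm{O}}$ and in $\Omega_{\mathrm{O}}^c$ with $\symbvec{n}_{\mathrm{O}}\cdot\nabla z=\symbvec{q}_{\mathrm{O}}$ on both sides of $\Gamma_{\mathrm{O}}$, and take the piecewise gradient, which then has no normal jump. This is exactly the analogue of the paper's $\Phi+\Psi$ gluing.
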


\section{The multi-domain variational formulation}\label{sec:varf}

We introduce the function space
\begin{equation*}
  \begin{split}
  \mathbb{X}(\Omega_{\mathrm{O}}^c) \coloneqq 
  \Bigl\{
  &\bigl(\left(\phi_{\delta}, p_{\delta}\right)_{\setMI_{\mathrm{O}}}, \left(\phi_{\beta}, p_{\beta}\right)_{\setMI_{\mathrm{B}}}, (u_\alpha)_{\setMI_{\mathrm{F}}}\bigr) \in \mathbb{H}(\Gamma_{\mathrm{O}}) \times \mathbb{H}(\Gamma_{\mathrm{B}}) \times \mathbb{H}^1(\Omega_{\mathrm{F}}) \mid \\
  &\exists u^* \in H^1(\mathbb{R}^d) \text{ such that } \BoldtraceDir[\mathrm{O}](u^*) = \symbvec{\phi}_{\mathrm{O}},\ \BoldtraceDir[\mathrm{B}](u^*) = \symbvec{\phi}_{\mathrm{B}}, \text{ and } u^* |_{\Omega_{\alpha}} = u_{\alpha}, \forall \alpha \in \setMI_{\mathrm{F}}
  \Bigr\}
  \end{split}
\end{equation*}
for the unknown of the sought multi-domain variational formulation. This space is nothing but the space of tuples of functions that satisfy \emph{Dirichlet} transmission conditions on the skeleton of the domain decomposition. 
It could have been denoted \(\mathbb{X}(\mathbb{R}^d)\), but we have chosen to use the notation \(\mathbb{X}(\Omega_{\mathrm{O}}^c)\) to emphasize that some boundary conditions may be involved in Problem~\eqref{pbm:initialPb}.

By defining the Dirichlet trace operator \(\B\) as 
\begin{equation*}
  \begin{split}
      \B \colon & \mathbb{H}(\Gamma_{\mathrm{O}}) \times \mathbb{H}(\Gamma_{\mathrm{B}}) \times \mathbb{H}^1(\Omega_{\mathrm{F}}) \longrightarrow \mathbb{H}_{\mathrm{D}}(\Sigma), \\ 
      & \bigl(\left(\phi_{\delta}, p_{\delta}\right)_{\setMI_{\mathrm{O}}}, \left(\phi_{\beta}, p_{\beta}\right)_{\setMI_{\mathrm{B}}}, (u_\alpha)_{\setMI_{\mathrm{F}}}\bigr) 
  \longmapsto
  \bigl( \symbvec{\phi}_{\mathrm{O}}, \symbvec{\phi}_{\mathrm{B}}, \BoldtraceDir[\mathrm{F}](\symbvec{u}_{\mathrm{F}}) \bigr),  
  \end{split}
\end{equation*}
we can observe that \(\mathbb{X}_{\mathrm{D}}(\Sigma) = \B\left(\mathbb{X}(\Omega_{\mathrm{O}}^c)\right)\). Once again, \(\mathbb{X}_{\mathrm{D}}(\Sigma)\) can be interpreted as the space of Dirichlet traces of functions that match on the skeleton \(\Sigma\). We can now state the sought variational formulation of Problem~\eqref{pbm:multi_domain_config} (equivalently, of Problem~\eqref{pbm:initialPb}).

\begin{proposition}\label{prop:multi_domain_variational_direct}
  Let \(u \in H^1_{\mathrm{loc}}(\Delta, \overline{\Omega_{\mathrm{O}}^c})\) be a solution to Problem~\eqref{pbm:multi_domain_config}. Then, \(
  \bigl(\symbvec{u}_{\mathrm{O}}, \symbvec{u}_{\mathrm{B}}, \symbvec{u}_{\mathrm{F}}\bigr) 
  \coloneqq 
  \bigl(
  \symbvec{\gamma}_c^{\mathrm{O}}(u), \symbvec{\gamma}^{\mathrm{B}}(u), \left(u |_{\Omega_{\alpha}}\right)_{\setMI_{\mathrm{F}}}
  \bigr)
  \)
  belongs to \(\mathbb{X}(\Omega_{\mathrm{O}}^c)\), and 
  is solution to
  \begin{equation}\label{pbm:single_domain_formulation}
    \begin{split}
      &\langle \symbvec{\A}_{\Gamma_{\mathrm{O}}} \symbvec{u}_{\mathrm{O}}, \symbvec{v}_{\mathrm{O}} \rangle
      + \langle \symbvec{\A}_{\Gamma_{\mathrm{B}}} \symbvec{u}_{\mathrm{B}}, \symbvec{v}_{\mathrm{B}} \rangle
      + \langle \symbvec{\A}_{\Omega_{\mathrm{F}}} \symbvec{u}_{\mathrm{F}}, \symbvec{v}_{\mathrm{F}} \rangle
      = 
      \langle \symbvec{f}_{\mathrm{F}}, \symbvec{v}_{\mathrm{F}} \rangle + 
      \langle \symbvec{g}_{D}, \symbvec{q}_{\mathrm{O}} \rangle, \\
      &\forall \left(\symbvec{v}_{\mathrm{O}}, \symbvec{v}_{\mathrm{B}}, \symbvec{v}_{\mathrm{F}}\right) 
      \coloneqq 
      \bigl(
      \left(\psi_{\delta}, q_{\delta}\right)_{\setMI_{\mathrm{O}}}, \left(\psi_{\beta}, q_{\beta}\right)_{\setMI_{\mathrm{B}}}, \left(v_{\alpha}\right)_{\setMI_{\mathrm{F}}}
      \bigr)
      \in \mathbb{X}(\Omega_{\mathrm{O}}^c)
    \end{split}
  \end{equation}
  with \(\symbvec{\A}_{\Gamma_{\bullet}} \coloneqq \diag(\A_{\Gamma_{\bullet,m}})_m\), \, \(\A_{\Gamma_{\bullet,m}} \colon \mathbb{H}(\Gamma_{\bullet,m}) \to {\mathbb{H}(\Gamma_{\bullet,m})}^* \),
  \begin{align*}
    &\A_{\Gamma_{\mathrm{O},j}} \coloneqq
    \begin{bmatrix}
      0 & \Id \\
      \Id & 0
    \end{bmatrix} \text{ and }
    \A_{\Gamma_{\mathrm{B},k}} \coloneqq 
    \begin{bmatrix}
      \W_{\mathrm{B},k} & \Id/2 + \Ktilde_{\mathrm{B},k} \\
      \Id/2 - \K_{\mathrm{B},k} & -\V_{\mathrm{B},k}
    \end{bmatrix},\\
    &\langle \symbvec{\A}_{\Omega_{\mathrm{F}}} \symbvec{u}_{\mathrm{F}}, \symbvec{v}_{\mathrm{F}} \rangle \coloneqq \sum_{\alpha \in \setMI_{\mathrm{F}}}
    \int_{\Omega_{\alpha}} \nabla u|_{\Omega_{\alpha}} \cdot \nabla v_{\alpha} - \kappa |_{\Omega_{\alpha}}^2 u|_{\Omega_{\alpha}} v_{\alpha}.
  \end{align*}
  The definition of the operators \(\A_{\Gamma_{\mathrm{B},k}}\) is the one arising from the \emph{Costabel FEM-BEM coupling}, see~\cite{Costabel1987SMC} and~\cite[Example~3.1]{BoisneaultBonazzoliEtAl2026DFB}.
\end{proposition}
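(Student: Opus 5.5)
The plan has two parts: show that the tuple of traces lies in \(\mathbb{X}(\Omega_{\mathrm{O}}^c)\), then show that, once each local operator has been evaluated on the solution, the residual of \eqref{pbm:single_domain_formulation} is a duality pairing of the Neumann data with a Dirichlet single-trace. Proposition~\ref{prop:dirichlet_annihilator}, direction (\(\Longleftarrow\)), will then make that pairing vanish.

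\emph{Membership in \(\mathbb{X}(\Omega_{\mathrm{O}}^c)\).} Since \(u \in H^1_{\mathrm{loc}}(\overline{\Omega_{\mathrm{O}}^c})\), its exterior Dirichlet traces \(\BoldtraceDirC[\mathrm{O}](u)\) lie in \(\mathbb{H}_{\mathrm{D}}(\Gamma_{\mathrm{O}})\).
\begin{itemize}
\item Because \(\Gamma_{\mathrm{O}}\) is bounded and \(\BoldtraceDir[\mathrm{O}]\) is onto, I pick \(w \in H^1_{\mathrm{loc}}(\overline{\Omega_{\mathrm{O}}})\) with \(\BoldtraceDir[\mathrm{O}](w) = \BoldtraceDirC[\mathrm{O}](u)\).
\item Gluing \(u\) and \(w\) gives an \(H^1_{\mathrm{loc}}(\mathbb{R}^d)\) function, since the two traces on \(\Gamma_{\mathrm{O}}\) coincide.
\item I multiply by a cutoff \(\chi \in \mathcal{C}^\infty_C(\mathbb{R}^d)\) with \(\chi \equiv 1\) on a neighbourhood of the bounded set \(\Sigma\) and of the bounded FEM subdomains. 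This yields \(u^* \in H^1(\mathbb{R}^d)\).
\end{itemize}
Then \(u^*|_{\Omega_\alpha} = u|_{\Omega_\alpha}\) for \(\alpha \in \setMI_{\mathrm{F}}\), \(\BoldtraceDir[\mathrm{B}](u^*) = \BoldtraceDir[\mathrm{B}](u)\), and \(\BoldtraceDir[\mathrm{O}](u^*) = \BoldtraceDirC[\mathrm{O}](u) = \symbvec{g}_{\mathrm{D}}\). The Neumann components of \(\symbvec{u}_{\mathrm{O}}\) and \(\symbvec{u}_{\mathrm{B}}\) are in \(H^{-1/2}\) because \(u \in H^1_{\mathrm{loc}}(\Delta,\cdot)\).

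\emph{Evaluating each local term.} Fix a test tuple \(\bigl((\psi_\delta,q_\delta), (\psi_\beta,q_\beta), (v_\alpha)\bigr) \in \mathbb{X}(\Omega_{\mathrm{O}}^c)\).
\begin{itemize}
\item \emph{Obstacles.} Here \(\langle \A_{\Gamma_{\mathrm{O},j}} \bm{u}_{\mathrm{O},j}, \bm{v}_{\mathrm{O},j} \rangle = \langle \traceNeuC[\mathrm{O},j](u), \psi_{\mathrm{O},j} \rangle + \langle g_{\mathrm{D},j}, q_{\mathrm{O},j} \rangle\). The second piece cancels exactly with \(\langle \symbvec{g}_{D}, \symbvec{q}_{\mathrm{O}} \rangle\) on the right-hand side.
\item \emph{BEM subdomains.} Since \(u|_{\Omega_{\mathrm{B},k}}\) solves the homogeneous Helmholtz equation (and the radiation condition if unbounded), the Calderón equations apply. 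By \eqref{eq:calderon_1}, the second row \((\Id/2 - \K_{\mathrm{B},k})\traceDir[\mathrm{B},k](u) - \V_{\mathrm{B},k}\traceNeu[\mathrm{B},k](u)\) vanishes, so the \(q_\beta\)-terms disappear. By \eqref{eq:calderon_2}, the first row reduces to \(\W_{\mathrm{B},k}\traceDir[\mathrm{B},k](u) + (\Id/2 + \Ktilde_{\mathrm{B},k})\traceNeu[\mathrm{B},k](u) = \traceNeu[\mathrm{B},k](u)\). The BEM contribution is therefore \(\langle \BoldtraceNeu[\mathrm{B}](u), \symbvec{\psi}_{\mathrm{B}} \rangle\).
\item \emph{FEM subdomains.} Green's first identity on each bounded Lipschitz \(\Omega_\alpha\), together with the PDE, gives \(\langle \symbvec{\A}_{\Omega_{\mathrm{F}}}\symbvec{u}_{\mathrm{F}}, \symbvec{v}_{\mathrm{F}} \rangle = \langle \symbvec{f}_{\mathrm{F}}, \symbvec{v}_{\mathrm{F}} \rangle + \langle \BoldtraceNeu[\mathrm{F}](u), \BoldtraceDir[\mathrm{F}](\symbvec{v}_{\mathrm{F}}) \rangle\).
\end{itemize}
Summing these, the difference between the left- and right-hand sides of \eqref{pbm:single_domain_formulation} equals \(\llangle \symbvec{p}, \B(\symbvec{v}) \rrangle\), where \(\symbvec{p} \coloneqq \bigl(\BoldtraceNeuC[\mathrm{O}](u), \BoldtraceNeu[\mathrm{B}](u), \BoldtraceNeu[\mathrm{F}](u)\bigr)\).

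\emph{Conclusion via the annihilator.} Set \(\Phi \coloneqq \nabla u\).
\begin{itemize}
\item Since \(u \in H^1_{\mathrm{loc}}(\Delta, \overline{\Omega_{\mathrm{O}}^c})\), we have \(\Phi \in H_{\mathrm{loc}}(\Div, \overline{\Omega_{\mathrm{O}}^c})\). This encodes the Neumann transmission conditions of Problem~\eqref{pbm:multi_domain_config} globally.
\item The normal traces are \(\BoldtraceDiv[\bullet](\Phi) = \BoldtraceNeu[\bullet](u)\) for \(\bullet = \mathrm{B},\mathrm{F}\), and \(\BoldtraceDivC[\mathrm{O}](\Phi) = \BoldtraceNeuC[\mathrm{O}](u)\).
\item The direction (\(\Longleftarrow\)) of Proposition~\ref{prop:dirichlet_annihilator}, with \(\symbvec{p}_{\mathrm{O}} = \BoldtraceNeuC[\mathrm{O}](u)\), gives \(\llangle \symbvec{p}, \symbvec{w} \rrangle = 0\) for all \(\symbvec{w} \in \mathbb{X}_{\mathrm{D}}(\Sigma)\).
\item Since \(\B(\symbvec{v}) \in \mathbb{X}_{\mathrm{D}}(\Sigma)\) for every test tuple, the residual vanishes.
\end{itemize}

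The step needing the most care is the bookkeeping of normals and traces on \(\Gamma_{\mathrm{O}}\). The exterior trace uses \(\symbvec{n}_{\mathrm{O}}\), the normal pointing out of the obstacle, and the pairing must match the sign convention of \(\BoldtraceDivC[\mathrm{O}]\) used in Proposition~\ref{prop:dirichlet_annihilator}. The placement of \(\Id/2 \pm\) in \(\A_{\Gamma_{\mathrm{B},k}}\) must also be checked against \eqref{eq:calderon_1}--\eqref{eq:calderon_2}, so that the \(q\)-rows vanish and the \(\psi\)-rows reproduce exactly the interior Neumann trace.
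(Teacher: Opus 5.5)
Your proof is correct and follows essentially the same route as the paper: Green's first identity on the FEM subdomains, both Calder\'on equations on the BEM subdomains (so the $q_\beta$-rows vanish and the $\psi_\beta$-rows return $\gamma_{\mathrm{N}}^{\mathrm{B},k}(u)$), the Dirichlet condition $\symbvec{\phi}_{\mathrm{O}}=\symbvec{g}_{D}$ cancelling the $\symbvec{q}_{\mathrm{O}}$-terms, and the Neumann-trace tuple annihilating $\mathbb{X}_{\mathrm{D}}(\Sigma)$. The differences are only in presentation. You compute the residual directly instead of assembling three equations and summing them, you spell out the lifting and cutoff construction of $u^*$ that the paper leaves implicit, and you invoke direction ($\Longleftarrow$) of Proposition~\ref{prop:dirichlet_annihilator} with $\Phi=\nabla u$ directly; that is indeed the result that applies here, whereas the paper routes this step through $\mathbb{X}_{\mathrm{N}}(\Sigma)$-membership and Proposition~\ref{prop:annihilators}.
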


\begin{proof}
  Since \(u \in H^1_{\mathrm{loc}}(\Delta, \overline{\Omega_{\mathrm{O}}^c})\), \(u|_{\Omega_{\alpha}} \in H^1(\Omega_{\alpha})\) for all \(\alpha \in \setMI_{\mathrm{F}}\). For \(\symbvec{v}_{\mathrm{F}} \in \mathbb{H}^1(\Omega_{\mathrm{F}})\), using Green's first identity we derive
  \begin{equation}\label{eq:multi_domain_green_formula}
    \begin{split}
      \langle \symbvec{f}_{\mathrm{F}}, \symbvec{v}_{\mathrm{F}} \rangle
      &=
      \sum_{\alpha \in \setMI_{\mathrm{F}}}\int_{\Omega_{\alpha}} \bigl(-\Delta u|_{\Omega_{\alpha}} - \kappa|_{\Omega_{\alpha}}^2 u|_{\Omega_{\alpha}}\bigr) v_{\alpha}\\
      &=
      \sum_{\alpha \in \setMI_{\mathrm{F}}} \int_{\Omega_{\alpha}} (\nabla u|_{\Omega_{\alpha}} \cdot \nabla v_{\alpha} - \kappa |_{\Omega_{\alpha}}^2 u|_{\Omega_{\alpha}} v_{\alpha})
      - \langle \BoldtraceNeu[\mathrm{F}](u), \BoldtraceDir[\mathrm{F}](\symbvec{v}_{\mathrm{F}}) \rangle\\
      &=
      \langle \symbvec{\A}_{\Omega_{\mathrm{F}}} \symbvec{u}_{\mathrm{F}}, \symbvec{v}_{\mathrm{F}} \rangle
      - \langle \BoldtraceNeu[\mathrm{F}](u), \BoldtraceDir[\mathrm{F}](\symbvec{v}_{\mathrm{F}}) \rangle.      
    \end{split}     
  \end{equation}
  Afterwards, \(u \in H^1_{\mathrm{loc}}(\overline{\Omega_{\mathrm{O}}^c})\) implies that \(
  \bigl(
  \symbvec{\gamma}_c^{\mathrm{O}}(u), \symbvec{\gamma}^{\mathrm{B}}(u), \left(u |_{\Omega_{\alpha}}\right)_{\setMI_{\mathrm{F}}}
  \bigr)
  \in \mathbb{X}(\Omega_{\mathrm{O}}^c)
  \)---which is the space in which we seek the solution to the variational formulation. Additionally, since \(u \in H^1_{\mathrm{loc}}(\Delta,\overline{\Omega_{\mathrm{O}}^c})\), we derive from Proposition~\ref{prop:neumann_annihilator} that \(
  \bigl(
  \BoldtraceNeuC[\mathrm{O}](u), \BoldtraceNeu[\mathrm{B}](u), \BoldtraceNeu[\mathrm{F}](u)
  \bigr)
  \in \mathbb{X}_{\mathrm{N}}(\Sigma)
  \).
  Thus, Proposition~\ref{prop:annihilators} implies that
  \begin{equation}\label{eq:prop4_neumann_transm}
    -\langle \BoldtraceNeu[\mathrm{F}](u), \symbvec{\psi}_{\mathrm{F}} \rangle
    =
    \langle \BoldtraceNeu[\mathrm{B}](u), \symbvec{\psi}_{\mathrm{B}} \rangle
    + \langle \BoldtraceNeuC[\mathrm{O}](u), \symbvec{\psi}_{\mathrm{O}} \rangle \quad
    \text{for all } \left(\symbvec{\psi}_{\mathrm{O}}, \symbvec{\psi}_{\mathrm{B}}, \symbvec{\psi}_{\mathrm{F}}\right) \in \mathbb{X}_{\mathrm{D}}(\Sigma).
  \end{equation}
  Let 
  \(
  \bigl(
   \left(\psi_{\delta}, q_{\delta}\right)_{\setMI_{\mathrm{O}}}, \left(\psi_{\beta}, q_{\beta}\right)_{\setMI_{\mathrm{B}}}, (v_\alpha)_{\setMI_{\mathrm{F}}}
  \bigr)  
  \in \mathbb{X}(\Omega_{\mathrm{O}}^c)
  \), 
  we apply~\eqref{eq:prop4_neumann_transm} to the Dirichlet trace of the previous tuple, namely
  \(
  \bigl(
  \symbvec{\psi}_{\mathrm{O}}, \symbvec{\psi}_{\mathrm{B}}, \BoldtraceDir[\mathrm{F}](\symbvec{v}_{\mathrm{F}})
  \bigr)
  \) (given by the operator \(\B\)), 
  and inject it into \cref{eq:multi_domain_green_formula} to obtain
  \begin{equation}\label{eq:multi_domain_green_formula_injected_neumann}
    \langle \symbvec{\A}_{\Omega_{\mathrm{F}}} \left(u |_{\Omega_{\alpha}}\right)_{\setMI_{\mathrm{F}}}, \symbvec{v}_{\mathrm{F}} \rangle
    + \langle \BoldtraceNeu[\mathrm{B}](u), \symbvec{\psi}_{\mathrm{B}} \rangle
    + \langle \BoldtraceNeuC[\mathrm{O}](u), \symbvec{\psi}_{\mathrm{O}} \rangle
    = \langle \symbvec{f}_{\mathrm{F}}, \symbvec{v}_{\mathrm{F}} \rangle.    
  \end{equation}
  Moreover, the boundary conditions on the \(\Gamma_{\mathrm{O},j}\) are weakly imposed with the equation 
  \begin{equation}\label{eq:multi_domain_bc}
    \langle \BoldtraceDirC[\mathrm{O}](u), \symbvec{q}_{\mathrm{O}} \rangle
    = 
    \langle \symbvec{g}_{D}, \symbvec{q}_{\mathrm{O}} \rangle \quad \text{for all } \symbvec{q}_{\mathrm{O}} \in \mathbb{H}_{\mathrm{N}}(\Gamma_{\mathrm{O}}).       
  \end{equation}
  
  Now, we are left to deal with the contributions from the homogeneous domains \(\Omega_{\mathrm{B},k}\). We do so using BIEs. Indeed, since \(-\Delta u - \kappa_k^2 u = 0\) in any \(\Omega_{\mathrm{B},k}\), we can use the Calderón equations~\eqref{eq:calderon_1} and~\eqref{eq:calderon_2} for all \(k\).
  Injecting the second Calderón equation~\eqref{eq:calderon_2} into \cref{eq:multi_domain_green_formula_injected_neumann}, writing variationally the first Calderón equation~\eqref{eq:calderon_1}, and considering \cref{eq:multi_domain_bc} to impose the boundary conditions for the impenetrable obstacles, we derive the following system (where the capital bold letters are block diagonal operators defined subdomainwise):
  \begin{equation*}
    \begin{split}
      &\begin{cases}
        \begin{aligned}
          \langle \symbvec{\A}_{\Omega_{\mathrm{F}}} \left(u |_{\Omega_{\alpha}}\right)_{\setMI_{\mathrm{F}}}, \symbvec{v}_{\mathrm{F}} \rangle
          + \langle \symbvec{\W}_{\mathrm{B}} \BoldtraceDir[\mathrm{B}](u), \symbvec{\psi}_{\mathrm{B}} \rangle    
          + \langle \left(\symbvec{\Id}/2 + \symbvec{\Ktilde}_{\mathrm{B}}\right) \BoldtraceNeu[\mathrm{B}](u), \symbvec{\psi}_{\mathrm{B}} \rangle 
          + \langle \BoldtraceNeuC[\mathrm{O}](u), \symbvec{\psi}_{\mathrm{O}} \rangle 
          &= \langle \symbvec{f}_{\mathrm{F}}, \symbvec{v}_{\mathrm{F}} \rangle, \\
          \langle \left(\symbvec{\Id}/2 - \symbvec{\K}_{\mathrm{B}}\right) \BoldtraceDir[\mathrm{B}](u), \symbvec{q}_{\mathrm{B}} \rangle
          - \langle \symbvec{\V}_{\mathrm{B}} \BoldtraceNeu[\mathrm{B}](u) , \symbvec{q}_{\mathrm{B}} \rangle
          &= 0, \\
          \langle \BoldtraceDirC[\mathrm{O}](u), \symbvec{q}_{\mathrm{O}} \rangle
          &= \langle \symbvec{g}_{D}, \symbvec{q}_{\mathrm{O}} \rangle,
        \end{aligned}
      \end{cases}\\
      &\forall \bigl( \left(\psi_{\delta}, q_{\delta}\right)_{\setMI_{\mathrm{O}}}, \left(\psi_{\beta}, q_{\beta}\right)_{\setMI_{\mathrm{B}}}, (v_\alpha)_{\setMI_{\mathrm{F}}} \bigr) \in \mathbb{X}(\Omega_{\mathrm{O}}^c).
    \end{split}
  \end{equation*}
  Finally, denoting 
  \(
  \bigl( 
    \left(\phi_{\delta}, p_{\delta}\right)_{\setMI_{\mathrm{O}}}, \left(\phi_{\beta}, p_{\beta}\right)_{\setMI_{\mathrm{B}}}, (u_\alpha)_{\setMI_{\mathrm{F}}} 
  \bigr)
  \coloneqq
  \bigl(
  \symbvec{\gamma}_c^{\mathrm{O}}(u), \symbvec{\gamma}^{\mathrm{B}}(u), \left(u |_{\Omega_{\alpha}}\right)_{\setMI_{\mathrm{F}}}
  \bigr)
  \) 
  the system writes
  \begin{equation*}
    \begin{split}
      &\text{Find } \bigl( \left(\phi_{\delta}, p_{\delta}\right)_{\setMI_{\mathrm{O}}}, \left(\phi_{\beta}, p_{\beta}\right)_{\setMI_{\mathrm{B}}}, (u_\alpha)_{\setMI_{\mathrm{F}}} \bigr)  \text{ in } \mathbb{X}(\Omega_{\mathrm{O}}^c) \text{ such that} \\
      &\begin{cases}
        \begin{aligned}
          \langle \symbvec{\A}_{\Omega_{\mathrm{F}}} \symbvec{u}_{\mathrm{F}}, \symbvec{v}_{\mathrm{F}} \rangle
          + \langle \symbvec{\W}_{\mathrm{B}} \symbvec{\phi}_{\mathrm{B}}, \symbvec{\psi}_{\mathrm{B}} \rangle      
          + \langle \left(\symbvec{\Id}/2 + \symbvec{\Ktilde}_{\mathrm{B}}\right) \symbvec{p}_{\mathrm{B}}, \symbvec{\psi}_{\mathrm{B}} \rangle      
          + \langle \symbvec{p}_{\mathrm{O}}, \symbvec{\psi}_{\mathrm{O}} \rangle 
          &= \langle \symbvec{f}_{\mathrm{F}}, \symbvec{v}_{\mathrm{F}} \rangle, \\
          \langle \left(\symbvec{\Id}/2 - \symbvec{\K}_{\mathrm{B}}\right) \symbvec{\phi}_{\mathrm{B}}, \symbvec{q}_{\mathrm{B}} \rangle
          - \langle \symbvec{\V}_{\mathrm{B}} \symbvec{p}_{\mathrm{B}} , \symbvec{q}_{\mathrm{B}} \rangle
          &= 0, \\
          \langle \symbvec{\phi}_{\mathrm{O}}, \symbvec{q}_{\mathrm{O}} \rangle
          &= \langle \symbvec{g}_{D}, \symbvec{q}_{\mathrm{O}} \rangle,
        \end{aligned}
      \end{cases}\\
      &\forall \ \bigl( \left(\psi_{\delta}, q_{\delta}\right)_{\setMI_{\mathrm{O}}}, \left(\psi_{\beta}, q_{\beta}\right)_{\setMI_{\mathrm{B}}}, (v_\alpha)_{\setMI_{\mathrm{F}}} \bigr) \in \mathbb{X}(\Omega_{\mathrm{O}}^c).  
    \end{split}
  \end{equation*}
  Eventually, summing the three lines gives the sought formulation~\eqref{pbm:single_domain_formulation}.
\end{proof}

We now prove that, given a solution to the variational formulation~\eqref{pbm:single_domain_formulation}, we can reconstruct a solution to the boundary value problem~\eqref{pbm:multi_domain_config}.  
\begin{proposition}\label{proposition:multi_domain_variational_reciprocal}
  Let \(
  \bigl(\symbvec{u}_{\mathrm{O}}, \symbvec{u}_{\mathrm{B}}, \symbvec{u}_{\mathrm{F}}\bigr) 
  \coloneqq 
  \bigl(
  \left(\phi_{\delta}, p_{\delta}\right)_{\setMI_{\mathrm{O}}}, \left(\phi_{\beta}, p_{\beta}\right)_{\setMI_{\mathrm{B}}}, \left(u_{\alpha}\right)_{\setMI_{\mathrm{F}}}
  \bigr)
  \in \mathbb{X}(\Omega_{\mathrm{O}}^c)\) be a solution to
  \begin{equation}
    \label{pbm:single_domain_formulation_multi_domain_bis}    
    \begin{split}
      &\langle \symbvec{\A}_{\Gamma_{\mathrm{O}}} \symbvec{u}_{\mathrm{O}}, \symbvec{v}_{\mathrm{O}} \rangle
      + \langle \symbvec{\A}_{\Gamma_{\mathrm{B}}} \symbvec{u}_{\mathrm{B}}, \symbvec{v}_{\mathrm{B}} \rangle
      + \langle \symbvec{\A}_{\Omega_{\mathrm{F}}} \symbvec{u}_{\mathrm{F}}, \symbvec{v}_{\mathrm{F}} \rangle
      = 
      \langle \symbvec{f}_{\mathrm{F}}, \symbvec{v}_{\mathrm{F}} \rangle + 
      \langle \symbvec{g}_{D}, \symbvec{q}_{\mathrm{O}} \rangle, \\
      &\forall \left(\symbvec{v}_{\mathrm{O}}, \symbvec{v}_{\mathrm{B}}, \symbvec{v}_{\mathrm{F}}\right) 
      \coloneqq 
      \bigl( 
       \left(\psi_{\delta}, q_{\delta}\right)_{\setMI_{\mathrm{O}}}, \left(\psi_{\beta}, q_{\beta}\right)_{\setMI_{\mathrm{B}}}, \left(v_{\alpha}\right)_{\setMI_{\mathrm{F}}} 
      \bigr)
      \in \mathbb{X}(\Omega_{\mathrm{O}}^c),
    \end{split}
  \end{equation}
  with the same definitions for the operators as in~Proposition~\ref{prop:multi_domain_variational_direct}.
  Then,
  \begin{equation*}
    w(x) \coloneqq
    \begin{cases}
      \GL_{\beta}\left(\phi_{\beta}, p_{\beta}\right)(x), & \text{ in } \Omega_{\beta}, \quad \beta \in \setMI_{\mathrm{B}} \\
      u_{\alpha}(x), & \text{ in } \Omega_{\alpha} , \quad \alpha \in \setMI_{\mathrm{F}}
    \end{cases}
  \end{equation*}
  belongs to \(H^1_{\mathrm{loc}}(\Delta, \overline{\Omega_{\mathrm{O}}^c})\) and solves Problem~\eqref{pbm:multi_domain_config}.
\end{proposition}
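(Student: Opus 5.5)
The plan is to test \eqref{pbm:single_domain_formulation_multi_domain_bis} against suitably chosen elements of \(\mathbb{X}(\Omega_{\mathrm{O}}^c)\) so as to split it back into the three lines of the system in the proof of Proposition~\ref{prop:multi_domain_variational_direct}. Then I would recover the Dirichlet and Neumann transmission conditions through Propositions~\ref{prop:neumann_annihilator} and~\ref{prop:dirichlet_annihilator}. The key remark is that the Neumann components \(q_\delta, q_\beta\) of a test tuple are unconstrained in the definition of \(\mathbb{X}(\Omega_{\mathrm{O}}^c)\): only the Dirichlet data must come from some \(u^*\in H^1(\mathbb{R}^d)\).

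\emph{Step 1 (splitting).} Taking \(u^*=0\), tuples of the form \(\bigl((0,q_\delta)_{\setMI_{\mathrm{O}}},(0,0)_{\setMI_{\mathrm{B}}},0\bigr)\) belong to \(\mathbb{X}(\Omega_{\mathrm{O}}^c)\). Inserting them yields \(\langle \symbvec{\phi}_{\mathrm{O}},\symbvec{q}_{\mathrm{O}}\rangle=\langle\symbvec{g}_{D},\symbvec{q}_{\mathrm{O}}\rangle\) for all \(\symbvec{q}_{\mathrm{O}}\), hence \(\symbvec{\phi}_{\mathrm{O}}=\symbvec{g}_{D}\). Similarly, tuples \(\bigl((0,0),(0,q_\beta),0\bigr)\) give \((\Id/2-\K_{\beta})\phi_\beta=\V_\beta p_\beta\) for every \(\beta\in\setMI_{\mathrm{B}}\). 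Subtracting these two identities leaves
\begin{equation*}
\langle \symbvec{\A}_{\Omega_{\mathrm{F}}}\symbvec{u}_{\mathrm{F}},\symbvec{v}_{\mathrm{F}}\rangle+\langle \symbvec{\lambda}_{\mathrm{B}},\symbvec{\psi}_{\mathrm{B}}\rangle+\langle \symbvec{p}_{\mathrm{O}},\symbvec{\psi}_{\mathrm{O}}\rangle=\langle\symbvec{f}_{\mathrm{F}},\symbvec{v}_{\mathrm{F}}\rangle,
\qquad \lambda_\beta\coloneqq \W_\beta\phi_\beta+(\Id/2+\Ktilde_\beta)p_\beta,
\end{equation*}
for all test tuples in \(\mathbb{X}(\Omega_{\mathrm{O}}^c)\). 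Choosing \(v_\alpha\in\mathscr{C}^\infty_c(\Omega_\alpha)\) in a single \(\Omega_\alpha\) and all other components zero (admissible with \(u^*\) the extension by zero) gives \(-\Delta u_\alpha-\kappa^2u_\alpha=f\) in \(\Omega_\alpha\) in the distributional sense. In particular \(u_\alpha\in H^1(\Delta,\Omega_\alpha)\), so \(\traceNeu[\alpha](u_\alpha)\) is well defined.

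\emph{Step 2 (traces of \(w\) in BEM subdomains and Dirichlet transmission).} In each \(\Omega_\beta\), \(w=\GL_\beta(\phi_\beta,p_\beta)\) lies in \(H^1_{\mathrm{loc}}(\Delta,\overline{\Omega_\beta})\), solves the homogeneous Helmholtz equation, and satisfies Sommerfeld's condition when \(\Omega_\beta\) is unbounded. Its traces are \(\gamma^\beta(w)=\CP_\beta(\phi_\beta,p_\beta)\). The Dirichlet component is \(\phi_\beta/2+\K_\beta\phi_\beta+\V_\beta p_\beta\), which equals \(\phi_\beta\) by the second Calderón-type equation obtained in Step~1. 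The Neumann component is exactly \(\lambda_\beta\). This is precisely where the Costabel structure of \(\A_{\Gamma_{\mathrm{B},k}}\) is used, and I expect it to be the crux.

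Consequently \(\bigl(\symbvec{\phi}_{\mathrm{O}},\BoldtraceDir[\mathrm{B}](w),\BoldtraceDir[\mathrm{F}](w)\bigr)=\B(\symbvec{u}_{\mathrm{O}},\symbvec{u}_{\mathrm{B}},\symbvec{u}_{\mathrm{F}})\in\mathbb{X}_{\mathrm{D}}(\Sigma)=\mathbb{X}_{\mathrm{N}}(\Sigma)^\circ\) by Proposition~\ref{prop:annihilators}. Proposition~\ref{prop:neumann_annihilator} then gives \(w\in H^1_{\mathrm{loc}}(\overline{\Omega_{\mathrm{O}}^c})\), i.e.\ the Dirichlet transmission conditions, together with \(\BoldtraceDirC[\mathrm{O}](w)=\symbvec{\phi}_{\mathrm{O}}=\symbvec{g}_D\), i.e.\ the boundary condition.

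\emph{Step 3 (Neumann transmission).} In the remaining identity, I would apply Green's first identity in each \(\Omega_\alpha\), \(\alpha\in\setMI_{\mathrm{F}}\), using the interior equation from Step~1. This yields \(\langle\BoldtraceNeu[\mathrm{F}](w),\BoldtraceDir[\mathrm{F}](\symbvec{v}_{\mathrm{F}})\rangle+\langle\symbvec{\lambda}_{\mathrm{B}},\symbvec{\psi}_{\mathrm{B}}\rangle+\langle\symbvec{p}_{\mathrm{O}},\symbvec{\psi}_{\mathrm{O}}\rangle=0\) for every test tuple. Since only Dirichlet data appear and \(\B\) maps \(\mathbb{X}(\Omega_{\mathrm{O}}^c)\) onto \(\mathbb{X}_{\mathrm{D}}(\Sigma)\), the tuple \(\bigl(\symbvec{p}_{\mathrm{O}},\BoldtraceNeu[\mathrm{B}](w),\BoldtraceNeu[\mathrm{F}](w)\bigr)\) annihilates \(\mathbb{X}_{\mathrm{D}}(\Sigma)\).

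Now apply Proposition~\ref{prop:dirichlet_annihilator} with \(\Phi\coloneqq\nabla w\), which is piecewise in \(H(\Div)\) (respectively \(H_{\mathrm{loc}}(\Div)\)) with normal traces equal to the Neumann traces. It gives \(\nabla w\in H_{\mathrm{loc}}(\Div,\overline{\Omega_{\mathrm{O}}^c})\), hence \(w\in H^1_{\mathrm{loc}}(\Delta,\overline{\Omega_{\mathrm{O}}^c})\) and the Neumann transmission conditions hold. Since \(\Delta w\) is then the piecewise Laplacian, the Helmholtz equation holds in each subdomain (with \(f=0\) in the BEM subdomains), so \(w\) solves Problem~\eqref{pbm:multi_domain_config}. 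The only delicate point besides Step~2 is checking admissibility of the chosen test tuples, which in each case is immediate from \(u^*=0\) or an extension by zero.
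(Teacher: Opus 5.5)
Your proposal is correct and follows essentially the same route as the paper. Like the paper, you test with pure Neumann components to obtain \(\symbvec{\phi}_{\mathrm{O}}=\symbvec{g}_{D}\) and, via \(\gamma^{\beta}(w)=\CP_\beta(\phi_\beta,p_\beta)\), \(\traceDir[\beta](w)=\phi_\beta\), identify the Costabel block \(\W_\beta\phi_\beta+(\Id/2+\Ktilde_\beta)p_\beta\) as \(\traceNeu[\beta](w)\), use \(\mathbb{X}_{\mathrm{D}}(\Sigma)=\B(\mathbb{X}(\Omega_{\mathrm{O}}^c))\) to get the annihilation relation, and conclude with Propositions~\ref{prop:neumann_annihilator} and~\ref{prop:dirichlet_annihilator} (with \(\Phi=\nabla w\)); the only difference is the order of steps, which is immaterial.
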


\begin{proof}
  \ding{172} Let \(\left(\symbvec{v}_{\mathrm{O}}, \symbvec{v}_{\mathrm{B}}, \symbvec{v}_{\mathrm{F}}\right)  = \left(\symbvec{0}_{\mathrm{O}}, \symbvec{0}_{\mathrm{B}}, \left(v_{\alpha}\right)_{\setMI_{\mathrm{F}}}\right)\) with \(v_{\alpha} \in \mathcal{C}^{\infty}_{c}(\Omega_{\alpha}) \,\text{ for all } \alpha \in \setMI_{\mathrm{F}}\). By construction, \(\left(\symbvec{v}_{\mathrm{O}}, \symbvec{v}_{\mathrm{B}}, \symbvec{v}_{\mathrm{F}}\right) \in \mathbb{X}(\Omega_{\mathrm{O}}^c)\). Thus, \cref{pbm:single_domain_formulation_multi_domain_bis} becomes
  \begin{equation*}    
    \begin{aligned}    
      \langle \symbvec{\A}_{\Omega_{\mathrm{F}}} \symbvec{u}_{\mathrm{F}}, \symbvec{v}_{\mathrm{F}} \rangle = \langle \symbvec{f}_{\mathrm{F}}, \symbvec{v}_{\mathrm{F}} \rangle
      &\iff 
      \sum_{\alpha \in \setMI_{\mathrm{F}}} \int_{\Omega_{\alpha}} (\nabla u_{\alpha} \cdot \nabla v_{\alpha} - \kappa |_{\Omega_{\alpha}}^2 u_{\alpha} v_{\alpha}) = \sum_{\alpha \in \setMI_{\mathrm{F}}} \int_{\Omega_{\alpha}} f|_{\Omega_{\alpha}}\, v_{\alpha}  \\
      &\iff 
      \sum_{\alpha \in \setMI_{\mathrm{F}}} \langle -\Delta u_{\alpha} - \kappa |_{\Omega_{\alpha}}^2 u_{\alpha}, v_{\alpha} \rangle = \sum_{\alpha \in \setMI_{\mathrm{F}}} \langle f|_{\Omega_{\alpha}}, v_{\alpha} \rangle,        
    \end{aligned} 
  \end{equation*}
  where in the last equivalence we applied Green's first identity. Thus, \(-\Delta u_{\alpha} - \kappa|_{\Omega_{\alpha}}^2 u_{\alpha} = f|_{\Omega_{\alpha}}\) in \(\Omega_{\alpha}\) in the sense of distributions. We deduce that \(u_{\alpha} \in H^1(\Delta, \Omega_{\alpha})\) because \(f|_{\Omega_{\alpha}} \in L^2(\Omega_{\alpha})\) and \(u_{\alpha} \in H^1(\Omega_{\alpha})\)
  
  \ding{173} We get back to general test functions \(\left(\symbvec{v}_{\mathrm{O}}, \symbvec{v}_{\mathrm{B}}, \symbvec{v}_{\mathrm{F}}\right) \in \mathbb{X}(\Omega_{\mathrm{O}}^c)\) and apply Green's first identity in the left-hand side of \cref{pbm:single_domain_formulation_multi_domain_bis}, using that \(-\Delta u_{\alpha} - \kappa|_{\Omega_{\alpha}}^2 u_{\alpha} = f|_{\Omega_{\alpha}}\) in \(\Omega_{\alpha}\):
  \begin{equation*}
    \begin{split}
    &\langle \symbvec{\A}_{\Gamma_{\mathrm{O}}} \symbvec{u}_{\mathrm{O}}, \symbvec{v}_{\mathrm{O}} \rangle
    + \langle \symbvec{\A}_{\Gamma_{\mathrm{B}}} \symbvec{u}_{\mathrm{B}}, \symbvec{v}_{\mathrm{B}} \rangle
    + \langle \symbvec{\A}_{\Omega_{\mathrm{F}}} \symbvec{u}_{\mathrm{F}}, \symbvec{v}_{\mathrm{F}} \rangle
    = \langle \symbvec{f}_{\mathrm{F}}, \symbvec{v}_{\mathrm{F}} \rangle +
    \langle \symbvec{g}_{D}, \symbvec{q}_{\mathrm{O}} \rangle \\
    \iff
    &\langle \symbvec{\A}_{\Gamma_{\mathrm{O}}} \symbvec{u}_{\mathrm{O}}, \symbvec{v}_{\mathrm{O}} \rangle
    + \langle \symbvec{\A}_{\Gamma_{\mathrm{B}}} \symbvec{u}_{\mathrm{B}}, \symbvec{v}_{\mathrm{B}} \rangle
    + \langle \BoldtraceNeu[\mathrm{F}](\symbvec{u}_{\mathrm{F}}), \BoldtraceDir[\mathrm{F}](\symbvec{v}_{\mathrm{F}}) \rangle
    = 
    \langle \symbvec{g}_{D}, \symbvec{q}_{\mathrm{O}} \rangle \\
    \iff
    &\langle \symbvec{\A}_{\Gamma_{\mathrm{O}}} \symbvec{u}_{\mathrm{O}}, \symbvec{v}_{\mathrm{O}} \rangle
    + \langle \symbvec{\A}_{\Gamma_{\mathrm{B}}} \symbvec{u}_{\mathrm{B}}, \symbvec{v}_{\mathrm{B}} \rangle
    + \langle \BoldtraceNeu[\mathrm{F}](\symbvec{u}_{\mathrm{F}}), \BoldtraceDir[\mathrm{F}](\symbvec{v}_{\mathrm{F}}) \rangle
    = 
    \langle \symbvec{g}_{D}, \symbvec{q}_{\mathrm{O}} \rangle \\
    \iff
    &\langle \symbvec{p}_{\mathrm{O}}, \symbvec{\psi}_{\mathrm{O}} \rangle
    + \langle \symbvec{\phi}_{\mathrm{O}}, \symbvec{q}_{\mathrm{O}} \rangle
    + \langle \symbvec{\A}_{\Gamma_{\mathrm{B}}} \symbvec{u}_{\mathrm{B}}, \symbvec{v}_{\mathrm{B}} \rangle
    + \langle \BoldtraceNeu[\mathrm{F}](\symbvec{u}_{\mathrm{F}}), \BoldtraceDir[\mathrm{F}](\symbvec{v}_{\mathrm{F}}) \rangle
    = 
    \langle \symbvec{g}_{D}, \symbvec{q}_{\mathrm{O}} \rangle
    \end{split}
  \end{equation*}
  
  \ding{174} Let \(\bigl(\symbvec{v}_{\mathrm{O}}, \symbvec{v}_{\mathrm{B}}, \symbvec{v}_{\mathrm{F}}\bigr) = \bigl(\left(0, q_{\delta}\right)_{\setMI_{\mathrm{O}}}, \symbvec{0}_{\mathrm{B}}, \symbvec{0}_{\mathrm{F}} \bigr)\) with  \(\symbvec{q}_{\mathrm{O}} \in \mathbb{H}_{\mathrm{N}}(\Gamma_{\mathrm{O}})\), which naturally belongs to \(\mathbb{X}(\Omega_{\mathrm{O}}^c)\). Then 
  \(
  \langle \symbvec{\phi}_{\mathrm{O}}, \symbvec{q}_{\mathrm{O}} \rangle = \langle \symbvec{g}_{D}, \symbvec{q}_{\mathrm{O}} \rangle\) for all \( \symbvec{q}_{\mathrm{O}} \in \mathbb{H}_{\mathrm{N}}(\Gamma_{\mathrm{O}})
  \),
  which also writes \(\symbvec{\phi}_{\mathrm{O}} = \symbvec{g}_{D}\) on \(\Gamma_{\mathrm{O}}\). 
  Therefore, the last equation of \ding{173} becomes 
  \begin{equation}\label{eq:varf_after_Dir}
    \langle \symbvec{p}_{\mathrm{O}}, \symbvec{\psi}_{\mathrm{O}} \rangle
    + \langle \symbvec{\A}_{\Gamma_{\mathrm{B}}} \symbvec{u}_{\mathrm{B}}, \symbvec{v}_{\mathrm{B}} \rangle
    + \langle \BoldtraceNeu[\mathrm{F}](\symbvec{u}_{\mathrm{F}}), \BoldtraceDir[\mathrm{F}](\symbvec{v}_{\mathrm{F}}) \rangle
    = 
    0 \quad \text{for all } \bigl(\symbvec{v}_{\mathrm{O}}, \symbvec{v}_{\mathrm{B}}, \symbvec{v}_{\mathrm{F}}\bigr) \in \mathbb{X}(\Omega_{\mathrm{O}}^c).
  \end{equation}
  
  \ding{175} For \(\beta \in \setMI_{\mathrm{B}}\), by the properties of \(\GL_{\beta}\) recalled in Section~\ref{sec:bios}, \(-\Delta w |_{\Omega_{\beta}} - \kappa_k^2 w |_{\Omega_{\beta}} = 0\) in \(\Omega_{\beta}\), and
  \begin{equation}\label{eq:calderon_w}
    \begin{cases}
      \traceDir[\beta](w) = \V_{\beta} p_{\beta} + \left(\Id/2 + \K_{\beta} \right) \phi_{\beta}, \\
      \traceNeu[\beta](w) = \left(\Id/2 + \Ktilde_{\beta} \right) p_{\beta} + \W_{\beta}  \phi_{\beta}.
    \end{cases}                
  \end{equation}
  By using the definition of the \(\A_{\Gamma_{\beta}}\) and injecting~\eqref{eq:calderon_w} into~\eqref{eq:varf_after_Dir}, we obtain
  \begin{equation*}
    \begin{split}
    &\langle \symbvec{p}_{\mathrm{O}}, \symbvec{\psi}_{\mathrm{O}} \rangle
    + \langle \symbvec{\A}_{\Gamma_{\mathrm{B}}} \symbvec{u}_{\mathrm{B}}, \symbvec{v}_{\mathrm{B}} \rangle
    + \langle \BoldtraceNeu[\mathrm{F}](\symbvec{u}_{\mathrm{F}}), \BoldtraceDir[\mathrm{F}](\symbvec{v}_{\mathrm{F}}) \rangle
    = 
    0 \\
    \iff 
    &\langle \symbvec{p}_{\mathrm{O}}, \symbvec{\psi}_{\mathrm{O}} \rangle
    + \langle \left(\symbvec{\Id}/2 - \symbvec{\K}_{\mathrm{B}}\right) \symbvec{\phi}_{\mathrm{B}} - \symbvec{\V}_{\mathrm{B}} \symbvec{p}_{\mathrm{B}}, \symbvec{q}_{\mathrm{B}} \rangle
    + \langle \left(\symbvec{\Id}/2 + \symbvec{\Ktilde}_{\mathrm{B}}\right) \symbvec{p}_{\mathrm{B}} + \symbvec{\W}_{\mathrm{B}} \symbvec{\phi}_{\mathrm{B}}, \symbvec{\psi}_{\mathrm{B}} \rangle
    + \langle \BoldtraceNeu[\mathrm{F}](\symbvec{u}_{\mathrm{F}}), \BoldtraceDir[\mathrm{F}](\symbvec{v}_{\mathrm{F}}) \rangle
    = 0 \\
    \iff 
    &\langle \symbvec{p}_{\mathrm{O}}, \symbvec{\psi}_{\mathrm{O}} \rangle
    + \langle  \symbvec{\phi}_{\mathrm{B}} - \BoldtraceDir[\mathrm{B}](w),  \symbvec{q}_{\mathrm{B}} \rangle
    + \langle \BoldtraceNeu[\mathrm{B}](w),  \symbvec{\psi}_{\mathrm{B}} \rangle
    + \langle \BoldtraceNeu[\mathrm{F}](\symbvec{u}_{\mathrm{F}}), \BoldtraceDir[\mathrm{F}](\symbvec{v}_{\mathrm{F}}) \rangle
    = 0.
    \end{split}
  \end{equation*}
  Let \(
  \bigl(
  \left(\psi_{\delta}, q_{\delta}\right)_{\setMI_{\mathrm{O}}}, \left(\psi_{\beta}, q_{\beta}\right)_{\setMI_{\mathrm{B}}}, \symbvec{v}_{\mathrm{F}}
  \bigr) 
  = 
  \bigl(
  \symbvec{0}_{\mathrm{O}}, \left(0, q_{\beta}\right)_{\setMI_{\mathrm{B}}}, \symbvec{0}_{\mathrm{F}} 
  \bigr) \in \mathbb{X}(\Omega_{\mathrm{O}}^c)
  \) for any \(\symbvec{q}_{\mathrm{B}} \in \mathbb{H}_{\mathrm{N}}(\Gamma_{\mathrm{B}})\), we derive that \(\BoldtraceDir[\mathrm{B}](w) = \symbvec{\phi}_{\mathrm{B}}\) on \(\Gamma_{\mathrm{B}}\), leaving us with
  \begin{equation*}
    \langle \symbvec{p}_{\mathrm{O}}, \symbvec{\psi}_{\mathrm{O}} \rangle
    + \langle \BoldtraceNeu[\mathrm{B}](w),  \symbvec{\psi}_{\mathrm{B}} \rangle
    + \langle \BoldtraceNeu[\mathrm{F}](w), \BoldtraceDir[\mathrm{F}](\symbvec{v}_{\mathrm{F}}) \rangle
    = 0 \quad \text{for all }
      \bigl( 
        \left(\psi_{\delta}, q_{\delta}\right)_{\setMI_{\mathrm{O}}}, \left(\psi_{\beta}, q_{\beta}\right)_{\setMI_{\mathrm{B}}}, \symbvec{v}_{\mathrm{F}} 
      \bigr)
      \in \mathbb{X}(\Omega_{\mathrm{O}}^c),
  \end{equation*}
  ---~let us recall that by definition \(w|_{\Omega_{\alpha}} = u_{\alpha}\) for all \(\alpha \in \setMI_{\mathrm{F}}\).
  
  \ding{176}
  Therefore, since \(\mathbb{X}_{\mathrm{D}}(\Sigma) = \B(\mathbb{X}(\Omega_{\mathrm{O}}^c))\), we get that
  \begin{equation}\label{eq:prop5_neumann_transm}
    \langle \symbvec{p}_{\mathrm{O}}, \symbvec{\psi}_{\mathrm{O}} \rangle
    + \langle \BoldtraceNeu[\mathrm{B}](w),  \symbvec{\psi}_{\mathrm{B}} \rangle
    + \langle \BoldtraceNeu[\mathrm{F}](w), \symbvec{\psi}_{\mathrm{F}} \rangle
    = 0 \quad \text{for all } 
    (
    \symbvec{\psi}_{\mathrm{O}}, \symbvec{\psi}_{\mathrm{B}}, \symbvec{\psi}_{\mathrm{F}}
    )
    \in \mathbb{X}_{\mathrm{D}}(\Sigma) 
  \end{equation}
  holds true. Then, according to Proposition~\ref{prop:annihilators},
  \(
  \bigl(\symbvec{p}_{\mathrm{O}}, \BoldtraceNeu[\mathrm{B}](w), \BoldtraceNeu[\mathrm{F}](w)\bigr) \in \mathbb{X}_{\mathrm{N}}(\Sigma)
  \). 
  We emphasize that the expression of \(\symbvec{\A}_{\Gamma_{\mathrm{B}}}\)---which is the one arising from the Costabel FEM-BEM coupling---allows us to obtain \(\BoldtraceNeu[\mathrm{B}](w)\), and not simply \(\symbvec{p}_{\mathrm{B}}\). This plays a crucial role in the next point to recover Neumann transmission conditions on the skeleton, and finally prove that \(w \in H^1_{\mathrm{loc}}(\Delta, \overline{\Omega_{\mathrm{O}}^c})\).

  \ding{177} We now show that \(w\) belongs to \(H^1_{\mathrm{loc}}(\Delta, \overline{\Omega_{\mathrm{O}}^c})\), and solves the Helmholtz equation in \(\Omega_{\mathrm{O}}^c\). 
  Thanks to the definition of \(w\) in \(\Omega_{\beta}\) (\(\beta \in \setMI_{\mathrm{B}}\)), we know that \(w|_{\Omega_{\beta}}\) belongs to \(H^1_{\mathrm{loc}}(\Delta, \overline{\Omega_{\beta}})\) and Sommerfeld's radiation condition is satisfied if \(\Omega_{\mathrm{O}}\) is bounded, and that \(w|_{\Omega_{\beta}}\) is solution to the Helmholtz equation in \(\Omega_{\beta}\).
  With the definition of \(w\) in \(\Omega_{\alpha}\) (\(\alpha \in \setMI_{\mathrm{F}}\)) and what we have derived in~\ding{172}, we also know that \(w|_{\Omega_{\alpha}}\) belongs to \(H^1(\Delta, \Omega_{\alpha})\) and is solution to the Helmholtz equation in \(\Omega_{\alpha}\).
  Thus, we deduce that \(w\) belongs to \(L^2_{\mathrm{loc}}(\overline{\Omega_{\mathrm{O}}^c})\).
  
  Second, since
  \(
  (
  \symbvec{u}_{\mathrm{O}}, \symbvec{u}_{\mathrm{B}}, \symbvec{u}_{\mathrm{F}}
  )
  \in \mathbb{X}(\Omega_{\mathrm{O}}^c)\)
  by hypothesis and recalling that \(\mathbb{X}_{\mathrm{D}}(\Sigma) = \B(\mathbb{X}(\Omega_{\mathrm{O}}^c))\), we derive that
  \(
  \bigl(\symbvec{\phi}_{\mathrm{O}}, \symbvec{\phi}_{\mathrm{B}}, \BoldtraceDir[\mathrm{F}](\symbvec{u}_{\mathrm{F}})\bigr) = 
  \bigl(\symbvec{g}_{D}, \BoldtraceDir[\mathrm{B}](w), \BoldtraceDir[\mathrm{F}](w)\bigr) \in \mathbb{X}_{\mathrm{D}}(\Sigma)
  \),
  where we have replaced \(\symbvec{\phi}_{\mathrm{O}}\) and \(\symbvec{\phi}_{\mathrm{B}}\) with their values  respectively derived in \ding{174} and \ding{175}. Thus, applying Proposition~\ref{prop:neumann_annihilator} leads to \(w \in H^1_{\mathrm{loc}}(\overline{\Omega_{\mathrm{O}}^c})\) and \(\symbvec{g}_{D} = \BoldtraceDirC[\mathrm{O}](w)\). The (Dirichlet) boundary conditions on the impenetrable obstacles (if any) are then satisfied.
  
  Third, since we know that \(w|_{\Omega_{\alpha}} \in H^1(\Delta, \Omega_{\alpha})\), \(w|_{\Omega_{\beta}} \in H^1_{\mathrm{loc}}(\Delta, \overline{\Omega_{\beta}})\) and Equation~\eqref{eq:prop5_neumann_transm} holds, we can apply Proposition~\ref{prop:dirichlet_annihilator} for \(\Phi = \nabla w\). Thus, \(w \in H^1_{\mathrm{loc}}(\Delta, \overline{\Omega_{\mathrm{O}}^c})\) (and \(\symbvec{p}_{\mathrm{O}} = \BoldtraceNeuC[\mathrm{O}](w)\)).

  Eventually, we have shown that \(w\) solves Problem~\eqref{pbm:multi_domain_config}.
\end{proof}

Note that this reconstruction result holds also when Problem~\eqref{pbm:initialPb} (equivalently Problem~\eqref{pbm:multi_domain_config}), and thus the variational formulation~\eqref{pbm:single_domain_formulation}, is not well posed for certain wavenumbers, called resonances. It holds even for spurious resonances, that is, when~\eqref{pbm:single_domain_formulation} is not well-posed while~\eqref{pbm:initialPb} is.

The latter remarkable result is deteriorated if the \(\A_{\Gamma_{\beta}}\) arise from the Johnson-Nédélec FEM-BEM coupling instead of the Costabel one, for (\(\beta \in \setMI_{\mathrm{B}}\)). It holds if all the \(\Omega_{\beta}\) are bounded, but fails to be true if \(\Omega_{\mathrm{B},1}\) is unbounded and \(\kappa_{1}\) is a spurious resonance.
Indeed, at step \ding{176} of the previous proof we can only state that
\(
\bigl(\symbvec{p}_{\mathrm{O}}, \symbvec{p}_{\mathrm{B}}, \BoldtraceNeu[\mathrm{F}](w)\bigr) \in \mathbb{X}_{\mathrm{N}}(\Sigma)
\),
with \(\symbvec{p}_{\mathrm{B}} - \BoldtraceNeu[\mathrm{B}](w) \in \ker(\symbvec{\V}_{\mathrm{B}}) \cap \left(\symbvec{\Id}/2 + \symbvec{\Ktilde}_{\mathrm{B}}\right)\).
In~\cite[Lemma~1]{BoisneaultBonazzoliEtAl2026SRS}, we have proved that this intersection is reduced to the trivial element when \(\Omega_{\beta}\) is bounded, so \(\symbvec{p}_{\mathrm{B}} = \BoldtraceNeu[\mathrm{B}](w)\) and Neumann transmission conditions for \(w\) can be recovered.
Yet, these transmission conditions can not be derived if \(\Omega_{\mathrm{B},1}\) is unbounded and \(\kappa_{1}\) is a spurious resonance because we can not prove that \(\symbvec{p}_{\mathrm{B}} = \BoldtraceNeu[\mathrm{B}](w)\).
This issue can be seen as a lack of knowledge about \(p_{\mathrm{B},1}\). Indeed, if \(\kappa_{1}\) is a spurious resonance, then the first Calderón equation~\eqref{eq:calderon_1} does not imply the second Calderón equation~\eqref{eq:calderon_2}. Therefore, some ``information'' from the second Calderón equation is ``missing''. 
This interpretation also explains why the Costabel FEM-BEM coupling raises no issue, since in that case both Calderón equations are used to define the operators \(\A_{\Gamma_{\beta}}\).
Finally, note that what we have just explained indicates that, when the geometry contains an unbounded (homogeneous) subdomain, the Costabel coupling can be used for this subdomain, while the Johnson-Nédélec coupling can be used to deal with the other bounded (homogeneous) subdomains.

\thanks{This work is funded by the Inria program “Actions exploratoires” (OptiGPR3D).}


\bibliographystyle{crplain}
\bibliography{crmath_BBCM}

\end{document}